\let\@fnsymbol\@arabic
\newcommand\blfootnote[1]{%
  \begingroup
  \renewcommand\thefootnote{}\footnote{#1}%
  \addtocounter{footnote}{-1}%
  \endgroup
}
\theoremstyle{definition}
\newtheorem{theorem}{Theorem}[section]
\newtheorem{definition}[theorem]{Definition}
\newtheorem{lemma}[theorem]{Lemma}
\newtheorem{corollary}[theorem]{Corollary} 
\newtheorem{example}[theorem]{Example}
\newtheorem{remark}[theorem]{Remark}
\patchcmd{\@maketitle}{\begin{center}}{\begin{flushleft}}{}{}
\patchcmd{\@maketitle}{\begin{tabular}[t]{c}}{\begin{tabular}[t]{@{}l}}{}{}
\patchcmd{\@maketitle}{\end{center}}{\end{flushleft}}{}{}
\begin{document}
\title{On the convergence of sequences in $\mathbb{R}^+$ through weighted geometric means via multiplicative calculus and application to intuitionistic fuzzy numbers}
\author{Enes Yavuz}
\date{{\small Department of Mathematics, Manisa Celal Bayar University, Manisa, Turkey\\E-mail: enes.yavuz@cbu.edu.tr}}
\maketitle
\thispagestyle{titlepage}
\blfootnote{\emph{Keywords:} convergence of sequences, multiplicative calculus, weighted geometric means, intuitionistic fuzzy numbers\\\rule{0.63cm}{0cm}\emph{\!\!Mathematics Subject Classification:} 03E72, 40A05, 40E05}

\noindent\textbf{Abstract:} We define weighted geometric mean method of convergence for sequences in $\mathbb{R}^+$ by using multiplicative calculus and obtain necessary and sufficient conditions under which convergence of sequences in $\mathbb{R}^+$ follows from convergence of their weighted geometric means. We also obtain multiplicative analogues of Schmidt type slow oscillation condition and Landau type two-sided condition for the convergence in particular. Besides, we introduce the concepts of  $^{\oplus}$convergence, $^{\otimes}$convergence, $(\bar{N},p)-^{\oplus}$convergence, $(\bar{G},p)-^{\otimes}$convergence for sequences of intuitionistic fuzzy numbers (IFNs) and apply the aforementioned conditions to achieve convergence in intuitionistic fuzzy number space. Examples of sequences are also given to illustrate the proposed methods of convergence.
\section{Introduction}
Multiplicative calculus\cite{katz,stanley} is alternative to classical calculus and uses ratios instead of differences in order to measure deviations and to compare numbers. The operations multiplication and division are crucial in multiplicative calculus and many concepts such as differentiation and integration are based on these operations. Being the main concept of this paper, convergence of sequences of positive real numbers is also defined via these operations in multiplicative calculus. In this paper, we use multiplicative calculus to deal with the convergence of sequences of real numbers through weighted geometric means. By the way, weighted geometric means are encountered in many topics of mathematics one of which is sequences of IFNs. In particular, see intuitionistic fuzzy aggregation operators\cite{xu-yager,xu}.

There are many examples of sequences in real number space and in intuitionistic fuzzy number space where the convergence can not be achieved via existing types of convergence. Besides, in some cases, the limit may not be unique or may not be the intended value even if the convergence is achieved via those types of convergence. To recover the convergence of such sequences, we need new types-methods of convergence. The main aim of this paper is to introduce weighted geometric mean method of convergence for sequences in $\mathbb{R}^+$ by using multiplicative calculus and prove related convergence theorems in $\mathbb{R}^+$ with application to intuitionistic fuzzy number space. Recently, \c{C}anak et al.\cite{canak} used multiplicative calculus and defined geometric mean method to assign a limit value to sequences which fail to converge in $\mathbb{R}^+$. Besides, they obtained conditions in the multiplicative sense under which convergence in $\mathbb{R}^+$ follows from convergence of geometric means and gave multiplicative analogues of Schmidt type slow oscillation condition and Landau type two-sided condition as corollaries. In Example \ref{ex1}, geometric mean method fails to assign a limit. Furthermore, in Example \ref{ex2}, geometric mean method may not assign the intended limit value even if it achieves a limit value in $\mathbb{R}^+$. In such cases, we may use weighted geometric means instead of geometric means. Hence, by using multiplicative calculus, in Section \ref{section3} we define weighted geometric mean method for sequences in $\mathbb{R}^+$ and obtain necessary and sufficient conditions under which convergence in $\mathbb{R}^+$ follows from convergence of weighted geometric means. In Section \ref{section4}, we define two types of convergence and weighted mean limitation methods for sequences of IFNs to handle sequences such in Example \ref{nonunique}, Example \ref{ex3}, Example \ref{ex4} and apply the conditions of Section \ref{section3} to sequences of IFNs in order to recover the convergence.

\section{Definitions and Notations}
Let $u\in\mathbb{R}^+$. Then, absolute value of $u$ in the multiplicative sense is\cite{bashirov}
\begin{eqnarray*}
|u|^*=
\begin{cases}
u, \quad &u\geq1,\\
\frac{1}{u}, &u<1.
\end{cases}
\end{eqnarray*}
Let $u,v\in\mathbb{R}^+$. Then, properties below are valid.\cite{abbas}
\begin{itemize}
\item[i)] $|u|^*\geq1$
\item[ii)] $|\frac{1}{u}|^*=|u|^*$
\item[iii)] $|u|^*\leq v$ if and only if $\frac{1}{v}\leq u\leq v$
\item[iv)] $|uv|^*\leq |u|^*|v|^*$.
\end{itemize}
Multiplicative distance is defined by\cite{bashirov}
\begin{eqnarray*}
d^*(u,v)=\left|\frac{u}{v}\right|^*
\end{eqnarray*}
and satisfies the following properties:
\begin{itemize}
\item[i)] $d^*(u,v)\geq1$ for all $u,v\in\mathbb{R}^+$,
\item[ii)] $d^*(u,v)=1$ if and only if $u=v$,
\item[iii)] $d^*(u,v)=d^*(v,u)$ for all $u,v\in\mathbb{R}^+$,
\item[iv)] $d^*(u,v)\leq d^*(u,z)d^*(z,v)$ for all $u,z,v\in\mathbb{R}^+$.
\end{itemize}
A sequence $(u_n)$ in $(\mathbb{R}^+, |\cdot|^*)$ is said to be *convergent to $a\in \mathbb{R}^+$ if for all $\varepsilon>1$ there exists $n_0\in\mathbb{N}$ such that $d^*(u_n,a)=\left|\frac{u_n}{a}\right|^*<\varepsilon$ whenever $n>n_0$, and denoted by $u_n\stackrel{*}{\to}a$. Sequence $(u_n)$ is said to be *bounded if there exists $B>1$ such that $\left|u_n\right|^*<B$ for all $n\in\mathbb{N}$. For further concepts such as multiplicative derivative, multiplicative differential equations and the Newtonian counterparts, see \cite{katz,stanley,bashirov,abbas,tor,newton1,newton2,newton3}.
\begin{remark}\label{remark1}
We note that a sequence $(u_n)$ in $\mathbb{R}^+$ *converges to $a\in \mathbb{R}^+$ if and only if $(u_n)$ converges to $a$ with respect to usual absolute value metric in $\mathbb{R}^+$. That is, *convergence and convergence are equivalent in $\mathbb{R}^+$. On the other hand, the same is not valid for *boundedness and boundedness in $\mathbb{R}^+$ which can be seen by the sequence $(u_n)=(1/n)$ that is bounded in $(\mathbb{R}^+, |\cdot|)$ but is not *bounded in $(\mathbb{R}^+, |\cdot|^*)$. See also \cite[Section 4.3]{bashirov}.
\end{remark}
Let $(p_n)$ be a sequence of nonnegative numbers such that
\begin{eqnarray}\label{p}
P_n:=\sum_{k=0}^np_k\to \infty\quad as\quad n\to\infty\qquad (p_0>0)
\end{eqnarray}
and $\lambda_n=\lfloor\lambda n\rfloor$ for a positive number $\lambda$. $SV\!A_+$ is the set of all sequences $p=(p_n)$ satisfying\cite{chen}
\begin{eqnarray*}
\liminf_{n\to\infty}\left|\frac{P_{\lambda_n}}{P_n}-1\right|>0\quad for \ each\ \lambda>0 \ with\ \lambda\neq1.
\end{eqnarray*}
\begin{definition}\cite{canak}
A sequence $(u_n)$ of positive real numbers is said to be $*-$slowly oscillating if
\begin{eqnarray*}
\liminf_{\lambda\to1^+}\limsup_{n\to\infty}\max_{n<m\leq\lambda_n}\left|\frac{u_m}{u_n}\right|^*=1,
\end{eqnarray*}
or equivalently
\begin{eqnarray*}
\liminf_{\lambda\to1^-}\limsup_{n\to\infty}\max_{\lambda_n<m\leq n}\left|\frac{u_n}{u_m}\right|^*=1.
\end{eqnarray*}
\end{definition}
Now, we give some definitions concerning intuitionistic fuzzy sets which are necessary for Section \ref{section4}.

\noindent Let $X$ be a non-empty set. Then, an Atanassov's intuitionistic fuzzy set(A-IFS)\cite{atanassov} has the following form: $A=\{\langle x,\mu_{A}(x),\nu_{A}(x)|x\in X\rangle\}$ where $\mu:X\to[0,1]$ is called membership function and $\nu:X\to[0,1]$ is called non-membership function. For any $x\in X$, $0\leq \mu_{A}(x)+\nu_{A}(x)\leq1$. In special case $\mu_{A}(x)+\nu_{A}(x)=1$, A-IFS degenerates to fuzzy set\cite{zadeh}. For convenience, Xu and Yager\cite{xu-yager} called $\alpha=(\mu_{\alpha},\nu_{\alpha})$ an IFN which satisfies $\mu_{\alpha}\in[0,1]$, $\nu_{\alpha}\in[0,1]$, and  $0\leq \mu_{\alpha}+\nu_{\alpha}\leq1$.
\begin{definition}\label{totalorder}\cite{xu-yager,xu}
Let $\alpha_1=(\mu_1,\nu_1)$ and $\alpha_2=(\mu_2,\nu_2)$ be two IFNs, $s(\alpha_1)=\mu_1-\nu_1$ and $s(\alpha_2)=\mu_2-\nu_2$ the scores of $\alpha_1$ and $\alpha_2$, respectively,
$h(\alpha_1)=\mu_1+\nu_1$ and $h(\alpha_2)=\mu_2+\nu_2$ the accuracy degrees of the $\alpha_1$ and $\alpha_2$, respectively. Then,
\begin{itemize}
\item If $s(\alpha_1)<s(\alpha_2)$, then $\alpha_1$ is smaller than $\alpha_2$, denoted by $\alpha_1<\alpha_2$.
\item If $s(\alpha_1)=s(\alpha_2)$, then
\item[(1)] If $h(\alpha_1)=h(\alpha_2)$, then $\alpha_1$ equal to $\alpha_2$, i.e., $\mu_1=\mu_2$, $\nu_1=\nu_2$ denoted by $\alpha_1=\alpha_2$;
\item[(2)] If $h(\alpha_1)<h(\alpha_2)$, then $\alpha_1$ is smaller than $\alpha_2$, denoted by $\alpha_1<\alpha_2$;
\item[(3)] If $h(\alpha_1)>h(\alpha_2)$, then $\alpha_1$ is larger than $\alpha_2$, denoted by $\alpha_1>\alpha_2$.
\end{itemize}
\end{definition}
\begin{definition}\label{partialorder}\cite{kerre}
Let $\alpha_1=(\mu_1,\nu_1)$ and $\alpha_2=(\mu_2,\nu_2)$ be two IFNs. Then
\begin{itemize}
\item[(1)] If $\mu_1>\mu_2$ and $\nu_1<\nu_2$, then $\alpha_1>_{\mathsmaller{L}}\alpha_2$
\item[(2)] If $\mu_1<\mu_2$ and $\nu_1>\nu_2$, then $\alpha_1<_{\mathsmaller{L}}\alpha_2$
\item[(3)] If $\mu_1=\mu_2$ and $\nu_1=\nu_2$, then $\alpha_1=\alpha_2$
\end{itemize}
\end{definition}
\begin{definition}\label{substraction}\cite{xu-yager,xu,type1}
Let $\alpha_1=(\mu_1,\nu_1)$, $\alpha_2=(\mu_2,\nu_2)$ be two IFNs and $c\geq0$. Then,
\begin{itemize}
\item[(1)] $\alpha_1\oplus\alpha_2=(1-(1-\mu_1)(1-\mu_2),\nu_1\nu_2)$
\item[(2)]
\begin{eqnarray*}
\displaystyle\alpha_1\ominus\alpha_2=
\begin{cases}\displaystyle\left(\frac{\mu_{1}-\mu_{2}}{1-\mu_{2}},\frac{\nu_{1}}{\nu_{2}}\right), \quad&\parbox[t]{5.2cm}{\it if\ $\mu_{1}\geq\mu_{2}$,\ $\nu_{1}\leq\nu_{2}$,\ $\nu_{2}>0$\ and\ $\nu_{1}\pi_{\alpha_2}\leq\pi_{\alpha_1}\nu_{2}$}\\[5mm]
(0,1),&otherwise
\end{cases}
\end{eqnarray*}
where $\pi_{\alpha_1}=1-\mu_1-\nu_1$ and $\pi_{\alpha_2}=1-\mu_2-\nu_2$
\item[(3)] $\alpha_1\otimes\alpha_2=(\mu_1\mu_2,1-(1-\nu_1)(1-\nu_2))$
\item[(4)] $c\alpha_1=(1-(1-\mu_1)^{c},\nu_1^{c}),\ where \ \alpha_1<_{\mathsmaller{L}}(1,0)$
\item[(5)] $\alpha_1^c=(\mu_1^{c},1-(1-\nu_1)^{c}),\ where\ \alpha_1>_{\mathsmaller{L}}(0,1)$
\end{itemize}
The addition region of IFN $\xi$ is defined by $A^{\oplus}_{\xi}=\{\alpha\mid \alpha=\xi\oplus\beta,\ \beta\in IFNS\}$. Let $(\alpha_n)$ be a sequence of IFNs where $\alpha_n=(\mu_n,\nu_n)$ . We call $(\alpha_n)$ an addition sequence of $\xi$ if there exists $n_0\in\mathbb{N}$ such that $\alpha_n\in A^{\oplus}_{\xi}$ for all $n>n_0$\cite{type1}.
\end{definition}
\section{Main Results}\label{section3}
In this section, we first define weighted geometric mean method of convergence in $\mathbb{R}^+$ by using multiplicative calculus in order to achieve the convergence of sequences such in Example \ref{ex1} and Example \ref{ex2}. Then, we give various conditions under which convergence of sequences in $\mathbb{R}^+$ follows from convergence of their weighted geometric means.

We note that $(u_n)$ denotes a sequence in $\mathbb{R}^+$ throughout this section. Let $(p_n)$ be a sequence of nonnegative real numbers satisfying \eqref{p}. The weighted geometric means $(w_n)$ of sequence $(u_n)$ is defined by
\begin{eqnarray}\label{w}
w_n=\left(\prod_{k=0}^nu_k^{p_k}\right)^{\frac{1}{P_n}}, \quad  n=0,1,2,....
\end{eqnarray}%
We say that $(u_n)$ *converges to $a\in\mathbb{R}^+$ by weighted geometric mean method, briefly: $(\bar{G},p)-$*convergent to $a$ if $w_n\stackrel{*}{\to}a$. When $p_n=1$ for all $n\in\mathbb{N}$, $(\bar{G},p)-$*convergence reduces to geometric mean method defined in \cite{canak}. For weighted arithmetic means, see \cite{moricz,chen,canak2}.
\begin{theorem}\label{regular}
If sequence $(u_n)$ is *convergent to $a\in\mathbb{R}^+$, then $(u_n)$ is $(\bar{G},p)-$*convergent to $a$.
\end{theorem}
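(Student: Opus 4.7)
The plan is to prove this regularity-type statement by a direct multiplicative-calculus adaptation of the classical argument that the weighted arithmetic mean method is regular. I will verify the $*$-convergence definition directly, i.e., show that for every $\varepsilon>1$ the multiplicative distance $d^*(w_n,a)=|w_n/a|^*$ is eventually less than $\varepsilon$. The first step is to rewrite the quotient in a form that exposes the deviations of $u_k$ from $a$; using $a=a^{P_n/P_n}$ one immediately gets
\begin{equation*}
\frac{w_n}{a}=\left(\prod_{k=0}^{n}\left(\frac{u_k}{a}\right)^{p_k}\right)^{1/P_n}.
\end{equation*}

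Given $\varepsilon>1$, I would use the hypothesis $u_n\stackrel{*}{\to}a$ to choose $N$ with $|u_k/a|^*<\varepsilon^{1/2}$ for all $k>N$, and then split the product at $N$ into a ``head'' piece $H_n=\bigl(\prod_{k=0}^{N}(u_k/a)^{p_k}\bigr)^{1/P_n}$ and a ``tail'' piece $T_n=\bigl(\prod_{k=N+1}^{n}(u_k/a)^{p_k}\bigr)^{1/P_n}$. For the tail, property (iii) of $|\cdot|^*$ gives $\varepsilon^{-1/2}<u_k/a<\varepsilon^{1/2}$ for each $k>N$, so $T_n$ lies between $\varepsilon^{-(P_n-P_N)/(2P_n)}$ and $\varepsilon^{(P_n-P_N)/(2P_n)}$; since $(P_n-P_N)/P_n\leq 1$ this yields $|T_n|^*\leq \varepsilon^{1/2}$ for all $n>N$. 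For the head, the bracketed quantity is a fixed positive constant $C_N$ depending only on the first $N{+}1$ terms, and $P_n\to\infty$ by \eqref{p}, so $C_N^{1/P_n}\to 1$ in $\mathbb{R}^+$; invoking Remark~\ref{remark1} (equivalence of $*$-convergence and usual convergence) this forces $|H_n|^*<\varepsilon^{1/2}$ for all sufficiently large $n$.

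Combining these two bounds via property (iv) $|uv|^*\leq |u|^*|v|^*$ I would conclude
\begin{equation*}
d^*(w_n,a)=|H_n\cdot T_n|^*\leq |H_n|^*\,|T_n|^*<\varepsilon^{1/2}\cdot\varepsilon^{1/2}=\varepsilon
\end{equation*}
for all large $n$, which is exactly $w_n\stackrel{*}{\to}a$. A conceptually shorter variant would be: take logarithms, observe that $\log w_n=\tfrac{1}{P_n}\sum_{k=0}^{n}p_k\log u_k$ is the classical weighted arithmetic mean of $\log u_k$, and invoke the standard regularity of the $(\bar N,p)$-method together with Remark~\ref{remark1}; however the direct multiplicative argument above keeps the paper self-contained within the $|\cdot|^*$/$d^*$ framework.

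I do not expect any genuine obstacle here: this is a routine regularity result and the whole content is bookkeeping with $\varepsilon^{1/2}$ thresholds. The only step that requires a little care is making sure the tail estimate uses $(P_n-P_N)/P_n\leq 1$ rather than $1$ outright, so that the bound does not degrade when $N$ is increased; everything else follows mechanically from properties (i)--(iv) of $|\cdot|^*$ and the defining assumption $P_n\to\infty$.
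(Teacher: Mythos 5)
Your proof is correct and follows essentially the same route as the paper's: rewrite $w_n/a$ as the weighted geometric mean of the ratios $u_k/a$, split the product at the index beyond which $|u_k/a|^*<\varepsilon^{1/2}$, bound the tail by $\varepsilon^{1/2}$ using the exponent $(P_n-P_N)/P_n\le 1$, and let the fixed head tend to $1$ via $P_n\to\infty$. The only cosmetic difference is that the paper bounds the head by a uniform constant $B$ raised to $P_{n_0}/P_n$ rather than treating it as a single fixed constant $C_N^{1/P_n}$, which is an equivalent bookkeeping choice.
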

\begin{proof}
Let $(u_n)$ be *convergent to $a\in\mathbb{R}^+$. Then, for given $\varepsilon>1$ there exist $n_0\in \mathbb{N}$ and $B>1$ such that $\left|\frac{u_n}{a}\right|^*<\varepsilon^{1/2}$ for $n>n_0$ and $\left|\frac{u_n}{a}\right|^*<B$ for $n\leq n_0$. Besides, there is $n_1\in \mathbb{N}$ such that $B^{\frac{P_{n_0}}{P_n}}<\varepsilon^{1/2}$ in view of the fact $\lim_{n\to\infty}B^{\frac{P_{n_0}}{P_n}}=1$. Hence, we have
\begin{eqnarray*}
\left|\frac{w_n}{a}\right|^*&=&\left|\left(\prod_{k=0}^nu_k^{p_k}\right)^{\frac{1}{P_n}}\Bigg/\left(\prod_{k=0}^na^{p_k}\right)^{\frac{1}{P_n}}\right|^*
\\&=&
\left|\left\{\prod_{k=0}^n\left(\frac{u_k}{a}\right)^{p_k}\right\}^{\frac{1}{P_n}}\right|^*
\\&\leq&
\left\{\prod_{k=0}^n\left(\left|\frac{u_k}{a}\right|^*\right)^{p_k}\right\}^{\frac{1}{P_n}}
\\&=&
\left\{\prod_{k=0}^{n_0}\left(\left|\frac{u_k}{a}\right|^*\right)^{p_k}\right\}^{\frac{1}{P_n}}\left\{\prod_{k=n_0+1}^{n}\left(\left|\frac{u_k}{a}\right|^*\right)^{p_k}\right\}^{\frac{1}{P_n}}
\\&<&
B^{\frac{P_{n_0}}{P_n}}(\varepsilon^{1/2})^{\frac{P_n-P_{n_0}}{P_n}}
\\&<&
B^{\frac{P_{n_0}}{P_n}}\varepsilon^{1/2}
\\&<&
\varepsilon
\end{eqnarray*}
for $n>\max\{n_0,n_1\}$ which completes the proof.
\end{proof}
The converse of Theorem \ref{regular} is not true in general. That is, $(\bar{G},p)-$*convergence does not imply *convergence in $\mathbb{R}^+$ which can be seen by the following examples.
\begin{example}\label{ex1}
Sequence $(u_n)$ defined by $u_n=e^{(-1)^{n}(n+1)}$ is not *convergent and not $G-$*convergent introduced by \cite{canak}, but it is $(\bar{G},p)-$*convergent to 1 for $p_n=\frac{1}{n+1}$.
\end{example}
\begin{example}\label{ex2}
Sequence $(u_n)$ defined by $u_n=2^{(-1)^n}$ is not *convergent, but it is $(\bar{G},p)-$*convergent to 1 for $p_n=1$ and $(\bar{G},p)-$*convergent to $\sqrt[3]{2}$ for
\begin{eqnarray*}
p_n=\begin{cases}
1, \quad &n\ is\ odd,\\
2, &n\ is\ even.
\end{cases}
\end{eqnarray*}
With appropriate choice of the weights $p_n$, the other intended values in $[1/2,2]$ can also be assigned as limit value to sequence $(u_n)$ by the help of weighted geometric mean method.
\end{example}
In this section we give some conditions for $(\bar{G},p)-$*convergence to imply *convergence in $\mathbb{R}^+$. We need following lemma to prove main results of this section.
\begin{lemma}
$(i)$ Let $\lambda>1$. For each $n$ such that $P_{\lambda_n}>P_n$ we have
\begin{eqnarray}\label{eq1}
\frac{u_n}{w_n}=\left(\frac{w_{\lambda_n}}{w_n}\right)^{\frac{P_{\lambda_n}}{P_{\lambda_n}-P_n}}\left[\left\{\prod_{k=n+1}^{\lambda_n}\left(\frac{u_k}{u_n}\right)^{p_k}\right\}^{\frac{1}{P_{\lambda_n}-P_n}}\right]^{-1}.
\end{eqnarray}
$(ii)$ Let $0<\lambda<1$. For each $n$ such that $P_n>P_{\lambda_n}$ we have
\begin{eqnarray}\label{eq2}
\frac{u_n}{w_n}=\left(\frac{w_n}{w_{\lambda_n}}\right)^{\frac{P_{\lambda_n}}{P_n-P_{\lambda_n}}}\left\{\prod_{k=\lambda_n+1}^{n}\left(\frac{u_n}{u_k}\right)^{p_k}\right\}^{\frac{1}{P_n-P_{\lambda_n}}}.
\end{eqnarray}
\end{lemma}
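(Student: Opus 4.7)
The plan is to derive both identities by pure algebraic manipulation of the defining relation
$w_n^{P_n}=\prod_{k=0}^n u_k^{p_k}$. The key observation is that splitting this product at the index $n$ (for part (i)) or at $\lambda_n$ (for part (ii)) couples $w_{\lambda_n}$ and $w_n$ via a ``tail'' product, and that tail can be rewritten so as to isolate $u_n/w_n$ once we exploit the identity $\sum_{k\in J}p_k=P_{\max J}-P_{\min J -1}$.

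For part (i), assume $\lambda>1$ and $P_{\lambda_n}>P_n$. I would start from
$w_{\lambda_n}^{P_{\lambda_n}}=w_n^{P_n}\prod_{k=n+1}^{\lambda_n}u_k^{p_k}$, and then pull a factor of $u_n^{P_{\lambda_n}-P_n}$ out of the tail product by writing it as $u_n^{P_{\lambda_n}-P_n}\prod_{k=n+1}^{\lambda_n}(u_k/u_n)^{p_k}$ (legal since $P_{\lambda_n}-P_n=\sum_{k=n+1}^{\lambda_n}p_k$). Solving for $u_n^{P_{\lambda_n}-P_n}$, dividing both sides by $w_n^{P_{\lambda_n}-P_n}$, and using $w_n^{P_n}\cdot w_n^{P_{\lambda_n}-P_n}=w_n^{P_{\lambda_n}}$ yields
\[
\left(\frac{u_n}{w_n}\right)^{P_{\lambda_n}-P_n}=\frac{(w_{\lambda_n}/w_n)^{P_{\lambda_n}}}{\prod_{k=n+1}^{\lambda_n}(u_k/u_n)^{p_k}}.
\]
Taking the $(P_{\lambda_n}-P_n)$-th root produces \eqref{eq1}.

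For part (ii), the argument is completely symmetric. With $0<\lambda<1$ and $P_n>P_{\lambda_n}$, I would split $w_n^{P_n}=w_{\lambda_n}^{P_{\lambda_n}}\prod_{k=\lambda_n+1}^n u_k^{p_k}$, and this time factor $u_n^{P_n-P_{\lambda_n}}$ \emph{out of the denominator} by writing the tail as $u_n^{P_n-P_{\lambda_n}}/\prod_{k=\lambda_n+1}^n(u_n/u_k)^{p_k}$. Solving for $u_n^{P_n-P_{\lambda_n}}$, dividing by $w_n^{P_n-P_{\lambda_n}}$, and noting $w_n^{P_n}/w_n^{P_n-P_{\lambda_n}}=w_n^{P_{\lambda_n}}$ gives
\[
\left(\frac{u_n}{w_n}\right)^{P_n-P_{\lambda_n}}=\left(\frac{w_n}{w_{\lambda_n}}\right)^{P_{\lambda_n}}\prod_{k=\lambda_n+1}^n\left(\frac{u_n}{u_k}\right)^{p_k},
\]
and taking the $(P_n-P_{\lambda_n})$-th root delivers \eqref{eq2}.

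There is no genuine obstacle here: the hypotheses $P_{\lambda_n}>P_n$ (resp.\ $P_n>P_{\lambda_n}$) make the fractional exponents $P_{\lambda_n}/(P_{\lambda_n}-P_n)$ and $1/(P_{\lambda_n}-P_n)$ (resp.\ their counterparts) well-defined, and positivity of all $u_k$ makes every root unambiguous in $\mathbb{R}^+$. The only care needed is bookkeeping of exponents, in particular the identity $P_n+(P_{\lambda_n}-P_n)=P_{\lambda_n}$ that lets the $w_n$-powers collapse cleanly. So I expect the proof to be a short two-step calculation for each part, with no analytic subtlety.
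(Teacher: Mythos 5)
Your proposal is correct and takes essentially the same route as the paper: both proofs split the product defining $w_{\lambda_n}$ (resp.\ $w_n$) at the index $n$ (resp.\ $\lambda_n$) and extract the factor $u_n^{P_{\lambda_n}-P_n}$ (resp.\ $u_n^{P_n-P_{\lambda_n}}$) from the tail using $\sum_{k=n+1}^{\lambda_n}p_k=P_{\lambda_n}-P_n$. The only difference is organizational --- the paper manipulates the ratio $w_{\lambda_n}/w_n$ carrying the fractional exponents $1/P_{\lambda_n}$, $1/P_n$ throughout, whereas you clear them first via $w_n^{P_n}=\prod_{k=0}^{n}u_k^{p_k}$ and take the $(P_{\lambda_n}-P_n)$-th root only at the end.
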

\begin{proof}
$(i)$ By the fact that
\begin{eqnarray*}
\frac{w_{\lambda_n}}{w_n}&=&\frac{\left(\prod_{k=0}^{\lambda_n}u_k^{p_k}\right)^{\frac{1}{P_{\lambda_n}}}}{\left(\prod_{k=0}^{n}u_k^{p_k}\right)^{\frac{1}{P_{n}}}}
\\&=&
\left(\prod_{k=0}^{n}u_k^{p_k}\right)^{\frac{P_n-P_{\lambda_n}}{P_nP_{\lambda_n}}}\left(\prod_{k=n+1}^{\lambda_n}u_k^{p_k}\right)^{\frac{1}{P_{\lambda_n}}}
\\&=&
w_n^{\frac{P_n-P_{\lambda_n}}{P_{\lambda_n}}}\left\{\prod_{k=n+1}^{\lambda_n}\left(\frac{u_k}{u_n}\right)^{p_k}u_n^{p_k}\right\}^{\frac{1}{P_{\lambda_n}}}
\\&=&
w_n^{\frac{P_n-P_{\lambda_n}}{P_{\lambda_n}}}\left\{\prod_{k=n+1}^{\lambda_n}\left(\frac{u_k}{u_n}\right)^{p_k}\right\}^{\frac{1}{P_{\lambda_n}}}u_n^{\frac{P_{\lambda_n}-P_n}{P_{\lambda_n}}},
\end{eqnarray*}
we have
\begin{eqnarray*}
\left(\frac{w_{\lambda_n}}{w_n}\right)^{\frac{P_{\lambda_n}}{P_{\lambda_n}-P_n}}&=&\frac{u_n}{w_n}\left\{\prod_{k=n+1}^{\lambda_n}\left(\frac{u_k}{u_n}\right)^{p_k}\right\}^{\frac{1}{P_{\lambda_n}-P_n}}
\end{eqnarray*}
which implies \eqref{eq1}.

$(ii)$ The proof of \eqref{eq2} can be done similarly, hence it is omitted.
\end{proof}
We now give the necessary and sufficient conditions under which $(\bar{G},p)-$*convergence implies *convergence in $\mathbb{R}^+$.
\begin{theorem}\label{main}
Let $(p_n)\in SV\!A_+$. If $(u_n)$ is $(\bar{G},p)-$*convergent to  $a\in\mathbb{R}^+$, then $(u_n)$ is *convergent to $a$ if and only if one of the following two conditions hold:
\begin{eqnarray}\label{con1}
\liminf_{\ \lambda\rightarrow 1^+}\limsup_{n\rightarrow \infty}\left\{\left|\prod_{k=n+1}^{\lambda_n}\left(\frac{u_k}{u_n}\right)^{p_k}\right|^*\right\}^{\frac{1}{P_{\lambda_n}-P_n}}=1
\end{eqnarray}
or
\begin{eqnarray}\label{con2}
\liminf_{\ \lambda\rightarrow 1^-}\limsup_{n\rightarrow \infty}\left\{\left|\prod_{k=\lambda_n+1}^{n}\left(\frac{u_n}{u_k}\right)^{p_k}\right|^*\right\}^{\frac{1}{P_n-P_{\lambda_n}}}=1.
\end{eqnarray}
\end{theorem}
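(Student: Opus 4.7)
The theorem is a two-sided Tauberian statement; identities \eqref{eq1} and \eqref{eq2} from the preceding lemma are the engine, and I would split into the two implications.

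For necessity, assume $u_n\stackrel{*}{\to}a$. Then $(u_n)$ is Cauchy in multiplicative distance, so for every $\varepsilon>1$ there is $N$ with $|u_k/u_n|^*<\varepsilon$ whenever $k,n\geq N$. Iterated submultiplicativity of $|\cdot|^*$ (together with $|u^c|^*=(|u|^*)^c$ for $c\geq 0$) and the identity $\sum_{k=n+1}^{\lambda_n}p_k=P_{\lambda_n}-P_n$ yield
$$\left(\left|\prod_{k=n+1}^{\lambda_n}\left(\frac{u_k}{u_n}\right)^{p_k}\right|^*\right)^{1/(P_{\lambda_n}-P_n)}\leq \varepsilon$$
uniformly in $n\geq N$ and in $\lambda>1$. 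Taking $\limsup_n$, then $\liminf_{\lambda\to 1^+}$, and finally $\varepsilon\downarrow 1$ produces \eqref{con1} (the lower bound $\geq 1$ is automatic). The symmetric estimate from the $\lambda<1$ side delivers \eqref{con2}. Notice that this half does not require $SV\!A_+$.

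For sufficiency, suppose \eqref{con1} holds. Applying $|\cdot|^*$ to \eqref{eq1} and using submultiplicativity I obtain
$$\left|\frac{u_n}{w_n}\right|^*\leq \left|\frac{w_{\lambda_n}}{w_n}\right|^{*\,P_{\lambda_n}/(P_{\lambda_n}-P_n)}\left(\left|\prod_{k=n+1}^{\lambda_n}\left(\frac{u_k}{u_n}\right)^{p_k}\right|^*\right)^{1/(P_{\lambda_n}-P_n)}.$$
For each fixed $\lambda>1$, the $SV\!A_+$ hypothesis gives $P_{\lambda_n}/P_n\geq 1+c_\lambda$ eventually, so $P_{\lambda_n}/(P_{\lambda_n}-P_n)\leq(1+c_\lambda)/c_\lambda$ is bounded in $n$. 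Combined with $w_n,w_{\lambda_n}\stackrel{*}{\to}a$ (Remark~\ref{remark1}), which forces $|w_{\lambda_n}/w_n|^*\to 1$, the first factor tends to $1$ as $n\to\infty$. Passing to $\limsup_n$ and then $\liminf_{\lambda\to 1^+}$ and invoking \eqref{con1} bounds $\limsup_n|u_n/w_n|^*$ by $1$; the reverse inequality being automatic, $u_n/w_n\stackrel{*}{\to}1$, and therefore $u_n\stackrel{*}{\to}a$. The case of \eqref{con2} is handled analogously from \eqref{eq2} with $0<\lambda<1$.

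The only delicate step is controlling the first factor $|w_{\lambda_n}/w_n|^{*\,P_{\lambda_n}/(P_{\lambda_n}-P_n)}$: the exponent may blow up as $\lambda\to 1^+$, so the convergence $w_{\lambda_n}/w_n\to 1$ is not by itself enough. Extracting a bound on this exponent that is uniform in $n$ for each fixed $\lambda\neq 1$ is exactly the role of $SV\!A_+$, and keeping the quantifier order straight ($n\to\infty$ first, $\lambda\to 1^+$ afterwards) is the subtle point of the whole argument.
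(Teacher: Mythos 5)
Your proof is correct, and the sufficiency half follows essentially the paper's own route: apply $|\cdot|^*$ to identities \eqref{eq1}--\eqref{eq2}, use submultiplicativity, invoke $SV\!A_+$ to bound the exponent $P_{\lambda_n}/(P_{\lambda_n}-P_n)$ uniformly in $n$ for each fixed $\lambda\neq1$ so that the factor $\left(\left|w_{\lambda_n}/w_n\right|^*\right)^{P_{\lambda_n}/(P_{\lambda_n}-P_n)}$ tends to $1$, and only then let $\lambda\to1$; you even state the quantifier-order caveat that the paper leaves implicit, and your consistent use of inequalities is cleaner than the paper's \eqref{con9} and \eqref{con11}, which are written as equalities where only ``$\leq$'' is justified. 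The necessity half is where you genuinely diverge: the paper again routes through the lemma, rearranging \eqref{eq1} into \eqref{con5} and combining it with $u_n/w_n\stackrel{*}{\to}1$ (from Theorem \ref{regular}) and with \eqref{con4}, the latter requiring $(p_n)\in SV\!A_+$; you instead exploit the multiplicative Cauchy property of a *convergent sequence to bound $\left|\prod_{k=n+1}^{\lambda_n}(u_k/u_n)^{p_k}\right|^*$ by $\varepsilon^{P_{\lambda_n}-P_n}$ directly. Your version is more elementary, bypasses the lemma and the regularity theorem entirely for this direction, and correctly isolates the fact that $SV\!A_+$ is needed only for sufficiency; the paper's version has the aesthetic advantage of running both directions through the single identity \eqref{eq1}. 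The one point to keep an eye on in either treatment is that the exponents $1/(P_{\lambda_n}-P_n)$ only make sense for those $n$ with $P_{\lambda_n}>P_n$, a restriction both you and the paper leave tacit.
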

\begin{proof}
{\it Necessity.} Suppose $u_n\stackrel{*}{\to}a$. Let $\lambda>1$. Then, from Theorem \ref{regular} we have $w_n\stackrel{*}{\to}a$ which implies
\begin{eqnarray}\label{con3}
\lim_{n\to\infty}\left|\frac{u_n}{w_n}\right|^*=1,
\end{eqnarray}
and
\begin{eqnarray}\label{con4}
\lim_{n\to\infty}\left(\left|\frac{w_{\lambda_n}}{w_n}\right|^*\right)^{\frac{P_{\lambda_n}}{P_{\lambda_n}-P_n}}=1,
\end{eqnarray}
in view of $(p_n)\in SVA_+$. Also, from equation \eqref{eq1} we have
\begin{eqnarray}\label{con5}
\left(\left|\prod_{k=n+1}^{\lambda_n}\left(\frac{u_k}{u_n}\right)^{p_k}\right|^*\right)^{\frac{1}{P_{\lambda_n}-P_n}}\leq\left|\frac{u_n}{w_n}\right|^*\left(\left|\frac{w_{\lambda_n}}{w_n}\right|^*\right)^{\frac{P_{\lambda_n}}{P_{\lambda_n}-P_n}}.
\end{eqnarray}
Then, \eqref{con1} is satisfied by virtue of \eqref{con3},\eqref{con4} and \eqref{con5}.

Let $0<\lambda<1$. Since $w_n\stackrel{*}{\to}a$, we have
\begin{eqnarray}\label{con6}
\lim_{n\to\infty}\left\{\left|\frac{w_n}{w_{\lambda_n}}\right|^*\right\}^{\frac{P_{\lambda_n}}{P_n-P_{\lambda_n}}}=1
\end{eqnarray}
in view of $(p_n)\in SVA_+$. Also, from equation \eqref{eq2} we have
\begin{eqnarray}\label{con7}
\left\{\left|\prod_{k=\lambda_n+1}^{n}\left(\frac{u_n}{u_k}\right)^{p_k}\right|^*\right\}^{\frac{1}{P_n-P_{\lambda_n}}}\leq\left|\frac{u_n}{w_n}\right|^*\left\{\left|\frac{w_n}{w_{\lambda_n}}\right|^*\right\}^{\frac{P_{\lambda_n}}{P_n-P_{\lambda_n}}}.
\end{eqnarray}
Then, \eqref{con2} is satisfied by virtue of \eqref{con3},\eqref{con6} and \eqref{con7}.

{\it Sufficiency.} Suppose $(u_n)$ is $(\bar{G},p)-$*convergent to $a$ and \eqref{con1} is satisfied. Then from \eqref{con1} there exists $\lambda_j\downarrow 1$ such that
\begin{eqnarray}\label{con8}
\lim_{j\to\infty}\limsup_{n\to \infty}\left\{\prod_{k=n+1}^{\lambda_{jn}}\left(\frac{u_k}{u_n}\right)^{p_k}\right\}^{\frac{1}{P_{\lambda_{jn}}-P_n}}=1
\end{eqnarray}
where $\lambda_{jn}=\lfloor \lambda_jn\rfloor$. Also by equality \eqref{eq1} we have
\begin{eqnarray}\label{con9}
\limsup_{n\to\infty}\left|\frac{u_n}{w_n}\right|^*=\lim_{j\to\infty}\limsup_{n\to \infty}\left\{\left|\frac{w_{\lambda_n}}{w_n}\right|^*\right\}^{\frac{P_{\lambda_{jn}}}{P_{\lambda_{jn}}-P_n}}\lim_{j\to\infty}\limsup_{n\to \infty}\left\{\left|\prod_{k=n+1}^{\lambda_{jn}}\left(\frac{u_k}{u_n}\right)^{p_k}\right|^*\right\}^{\frac{1}{P_{\lambda_{jn}}-P_n}}.
\end{eqnarray}
Hence, from \eqref{con4}, \eqref{con8} and  \eqref{con9} we get
\begin{eqnarray*}
\limsup_{n\to\infty}\left|\frac{u_n}{w_n}\right|^*=1,
\end{eqnarray*}
which implies $u_n\stackrel{*}{\to}a$ in view of the fact that $w_n\stackrel{*}{\to}a$.

Suppose $(u_n)$ is $(\bar{G},p)-$*convergent to $a$ and \eqref{con2} is satisfied. Then, from \eqref{con2} there exists $\lambda_j\uparrow 1$ such that
\begin{eqnarray}\label{con10}
\lim_{j\to\infty}\limsup_{n\to \infty}\left\{\left|\prod_{k=\lambda_{jn}+1}^{n}\left(\frac{u_n}{u_k}\right)^{p_k}\right|^*\right\}^{\frac{1}{P_n-P_{\lambda_{jn}}}}=1.
\end{eqnarray}
Also by equality \eqref{eq2} we have
\begin{eqnarray}\label{con11}
\limsup_{n\to\infty}\left|\frac{u_n}{w_n}\right|^*=\lim_{j\to\infty}\limsup_{n\to \infty}\left(\left|\frac{w_n}{w_{\lambda_{jn}}}\right|^*\right)^{\frac{P_{\lambda_{jn}}}{P_n-P_{\lambda_{jn}}}}\lim_{j\to\infty}\limsup_{n\to \infty}\left\{\left|\prod_{k=\lambda_{jn}+1}^{n}\left(\frac{u_n}{u_k}\right)^{p_k}\right|^*\right\}^{\frac{1}{P_n-P_{\lambda_{jn}}}}.
\end{eqnarray}
Hence, from \eqref{con6}, \eqref{con10} and  \eqref{con11} we get
\begin{eqnarray*}
\limsup_{n\to\infty}\left|\frac{u_n}{w_n}\right|^*=1,
\end{eqnarray*}
which implies $u_n\stackrel{*}{\to}a$ in view of the fact that $w_n\stackrel{*}{\to}a$.
\end{proof}
In view of Theorem \ref{main}, we give following corollary as result of the fact that $*-$slowly oscillation implies \eqref{con1} and \eqref{con2}.
\begin{corollary}\label{slowly}
If $(u_n)$ is $(\bar{G},p)-$*convergent to $a\in\mathbb{R}^+$ and $*-$slowly oscillating, then it is *convergent to $a$.
\end{corollary}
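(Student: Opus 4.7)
The plan is to deduce the corollary directly from Theorem \ref{main} by showing that $*$-slow oscillation implies condition \eqref{con1} (the argument for \eqref{con2} is symmetric, using the equivalent formulation of $*$-slow oscillation with $\lambda\to 1^-$).

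First I would fix $\lambda>1$ and an index $n$ large enough that $P_{\lambda_n}>P_n$. For each $k$ with $n<k\leq \lambda_n$, the quantity $\left|\frac{u_k}{u_n}\right|^*$ is clearly bounded above by $M_n(\lambda):=\max_{n<m\leq\lambda_n}\left|\frac{u_m}{u_n}\right|^*$. Using property (iv) of $|\cdot|^*$ (submultiplicativity) iteratively on the finite product, I obtain the pointwise estimate
\begin{equation*}
\left|\prod_{k=n+1}^{\lambda_n}\left(\frac{u_k}{u_n}\right)^{p_k}\right|^*\leq \prod_{k=n+1}^{\lambda_n}\left(\left|\frac{u_k}{u_n}\right|^*\right)^{p_k}\leq M_n(\lambda)^{\,P_{\lambda_n}-P_n}.
\end{equation*}
Raising both sides to the power $1/(P_{\lambda_n}-P_n)$ collapses the exponent and yields the clean bound
\begin{equation*}
\left\{\left|\prod_{k=n+1}^{\lambda_n}\left(\frac{u_k}{u_n}\right)^{p_k}\right|^*\right\}^{\frac{1}{P_{\lambda_n}-P_n}}\leq M_n(\lambda).
\end{equation*}

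Next I would take $\limsup_{n\to\infty}$ on both sides and then $\liminf_{\lambda\to 1^+}$; by the $*$-slow oscillation hypothesis, the right hand side tends to $1$. Since $|\cdot|^*\geq 1$, the left hand side is also bounded below by $1$, so it must equal $1$. This is exactly condition \eqref{con1}. Theorem \ref{main} then delivers $u_n\stackrel{*}{\to}a$.

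There is no real obstacle here; the only point that needs a little care is the interchange of the $\max$ bound with the weighted product, which is handled cleanly by property (iv) of the multiplicative absolute value together with the fact that the weighted exponents $p_k$ sum to exactly $P_{\lambda_n}-P_n$ over the range $n<k\leq\lambda_n$. The symmetric case using \eqref{con2} and the equivalent $\lambda\to 1^-$ form of $*$-slow oscillation is handled by the same estimate applied to $\prod_{k=\lambda_n+1}^{n}(u_n/u_k)^{p_k}$, giving a second (redundant) route to the conclusion.
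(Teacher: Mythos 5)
Your argument is correct and follows exactly the route the paper intends: it states Corollary \ref{slowly} as an immediate consequence of Theorem \ref{main} together with the observation that $*$-slow oscillation implies \eqref{con1} and \eqref{con2}. Your estimate $\left\{\left|\prod_{k=n+1}^{\lambda_n}(u_k/u_n)^{p_k}\right|^*\right\}^{1/(P_{\lambda_n}-P_n)}\leq\max_{n<m\leq\lambda_n}\left|u_m/u_n\right|^*$ is precisely the detail the paper leaves implicit, and it is handled correctly via the identity $|u^p|^*=(|u|^*)^p$ for $p\geq0$ and submultiplicativity.
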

\begin{lemma}\label{one-sided}
If $\left(\left(\Delta^*u_n\right)^n\right)$ is *bounded then $(u_n)$ is $*-$slowly oscillating, where
\begin{eqnarray*}
\Delta^*u_n=\frac{u_n}{u_{n-1}} \ \textrm{for} \ n\geq1\ \text{and}\ \Delta^*u_0=u_0.
\end{eqnarray*}
\end{lemma}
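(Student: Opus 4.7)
The plan is to unfold $u_m/u_n$ as a telescoping product of multiplicative differences and then control each factor using the hypothesis. First, from $*$boundedness of $\bigl((\Delta^{*}u_n)^n\bigr)$ I extract a constant $B>1$ with
\[
\bigl|(\Delta^{*}u_n)^n\bigr|^{*}\leq B\qquad\text{for all }n\in\mathbb{N}.
\]
Since the piecewise definition of $|\cdot|^{*}$ makes it clear that $|x^{c}|^{*}=(|x|^{*})^{c}$ for every $x\in\mathbb{R}^{+}$ and every $c>0$, the above bound is equivalent to $|\Delta^{*}u_n|^{*}\leq B^{1/n}$.

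Next, fix $\lambda>1$ and any $n<m\leq\lambda_n$, and telescope
\[
\frac{u_m}{u_n}=\prod_{k=n+1}^{m}\Delta^{*}u_k.
\]
Applying property (iv) of $|\cdot|^{*}$ iteratively together with the per-term bound gives
\[
\left|\frac{u_m}{u_n}\right|^{*}\leq\prod_{k=n+1}^{m}|\Delta^{*}u_k|^{*}\leq B^{\sum_{k=n+1}^{m}1/k}.
\]

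I would then estimate the harmonic tail by integral comparison: for every $n$ and every $m\leq\lambda_n$,
\[
\sum_{k=n+1}^{m}\frac{1}{k}\leq\int_{n}^{\lambda_n}\frac{dx}{x}=\ln\!\frac{\lambda_n}{n}\leq\ln\lambda.
\]
Hence $\max_{n<m\leq\lambda_n}|u_m/u_n|^{*}\leq B^{\ln\lambda}$ uniformly in $n$, so $\limsup_{n\to\infty}\max_{n<m\leq\lambda_n}|u_m/u_n|^{*}\leq B^{\ln\lambda}$. Because property (i) of $|\cdot|^{*}$ forces the max to be at least $1$, sending $\lambda\to1^{+}$ squeezes the defining $\liminf$ to exactly $1$, which is precisely $*$slow oscillation.

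There is no real obstacle: the argument is the multiplicative transcription of the classical observation that Landau's one-sided condition $n(u_n-u_{n-1})=O(1)$ forces slow oscillation in the additive setting. The only point that requires a little care is ensuring the harmonic estimate is uniform in $n$, and the integral comparison above handles this cleanly.
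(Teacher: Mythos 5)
Your proof is correct and follows essentially the same route as the paper: telescope $u_m/u_n$ into multiplicative differences, use submultiplicativity of $|\cdot|^{*}$ and the per-term bound $|\Delta^{*}u_k|^{*}\leq B^{1/k}$, and control the harmonic tail uniformly so that the bound tends to $1$ as $\lambda\to 1^{+}$. The only cosmetic difference is that the paper bounds $\sum_{k=n+1}^{m}1/k$ by $(m-n)/n\leq\lambda-1$ (and phrases the conclusion as an $\varepsilon$--$\lambda$ choice) whereas you use the integral comparison to get $\ln\lambda$; both estimates are equally adequate.
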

\begin{proof}
Let $\left(\left(\Delta^*u_n\right)^n\right)$ be *bounded. Then, there is $H>1$ such that $\left|\left(\Delta^*u_n\right)^n\right|^*<H$ for every $n\in\mathbb{N}$. Let $\varepsilon>1$ be given. Then, for $1<\lambda<1+\log_{H}{\varepsilon}$ and $n<m\leq\lambda_n$ we get
\begin{eqnarray*}
\left|\frac{u_m}{u_n}\right|^*=\left|\prod_{k=n+1}^{m}\!\!\!\Delta^*u_k\right|^*\leq \prod_{k=n+1}^{m}\!\!\!\left|\Delta^*u_k\right|^*<\prod_{k=n+1}^{m} H^{1/k}<H^{\frac{m-n}{n}}\leq H^{\lambda-1}<\varepsilon,
\end{eqnarray*}
which implies that $(u_n)$ is $*-$slowly oscillating.
\end{proof}
In view of Corollary \ref{slowly} and Lemma \ref{one-sided} we give following results.
\begin{corollary}\label{cor1}
Let $(p_n)\in SV\!A_+$. If $(u_n)$ is $(\bar{G},p)-$*convergent to $a\in\mathbb{R}^+$ and $\left(\left(\Delta^*u_n\right)^n\right)$ is *bounded, then $(u_n)$ is *convergent to $a$.
\end{corollary}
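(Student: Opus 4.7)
Corollary \ref{cor1} is a direct chaining of the two immediately preceding results, and the plan is simply to present it as such. First I would invoke Lemma \ref{one-sided}: the hypothesis that $\bigl((\Delta^*u_n)^n\bigr)$ is *bounded is exactly the input of that lemma, so its conclusion that $(u_n)$ is $*-$slowly oscillating is available for free. Second, I would apply Corollary \ref{slowly} to $(u_n)$, whose two hypotheses---$(\bar{G},p)-$*convergence to $a$ (given) and $*-$slow oscillation (just obtained)---are both now in hand; its conclusion is exactly $u_n\stackrel{*}{\to}a$. It is in this second step that the assumption $(p_n)\in SV\!A_+$ is consumed, since Corollary \ref{slowly} inherits it from Theorem \ref{main}.

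There is essentially no obstacle. All of the analytic work---the telescoping estimate $\left|\frac{u_m}{u_n}\right|^*\le \prod_{k=n+1}^{m}\left|\Delta^*u_k\right|^*< H^{(m-n)/n}\le H^{\lambda-1}$ in Lemma \ref{one-sided}, and the exploitation of the identities \eqref{eq1}--\eqref{eq2} in Theorem \ref{main}---has already been carried out in the preceding statements. Conceptually, the role of the corollary is simply to package the multiplicative Landau-type two-sided condition $\left|(\Delta^*u_n)^n\right|^*<H$ as a concrete, easily checkable sufficient condition that upgrades $(\bar{G},p)-$*convergence to *convergence, bypassing the need to verify slow oscillation directly. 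Given that this is purely a repackaging, the written proof should be kept to two sentences: one citing Lemma \ref{one-sided}, the other citing Corollary \ref{slowly}.
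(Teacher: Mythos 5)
Your proposal is correct and matches the paper's own (implicit) argument exactly: the paper introduces Corollary \ref{cor1} with the words ``In view of Corollary \ref{slowly} and Lemma \ref{one-sided} we give following results,'' i.e.\ precisely the two-step chaining you describe. Nothing further is needed.
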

\begin{corollary}\label{cor2}
Let $(p_n)\in SV\!A_+$. If $(u_n)$ is $(\bar{G},p)-$*convergent to $a\in\mathbb{R}^+$ and $\left(\Delta^*u_n\right)^n\stackrel{*}{\to}1$, then $(u_n)$ is *convergent to $a$.
\end{corollary}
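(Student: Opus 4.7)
The plan is to derive Corollary \ref{cor2} as an immediate consequence of Corollary \ref{cor1}. The only nontrivial link to establish is that the hypothesis $\left(\Delta^*u_n\right)^n\stackrel{*}{\to}1$ forces the sequence $\left(\left(\Delta^*u_n\right)^n\right)$ to be *bounded; once that is in hand, Corollary \ref{cor1} delivers *convergence of $(u_n)$ to $a$.

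First I would verify that any *convergent sequence in $\mathbb{R}^+$ is *bounded. Concretely, fix $\varepsilon=2>1$; by the definition of *convergence there is $n_0\in\mathbb{N}$ such that $d^*\bigl((\Delta^*u_n)^n,1\bigr)=\bigl|(\Delta^*u_n)^n\bigr|^*<2$ for all $n>n_0$. Setting $B:=\max\bigl\{2,\,|(\Delta^*u_0)^0|^*,\,|(\Delta^*u_1)^1|^*,\,\ldots,\,|(\Delta^*u_{n_0})^{n_0}|^*\bigr\}+1$ gives $\bigl|(\Delta^*u_n)^n\bigr|^*<B$ for every $n\in\mathbb{N}$, so $\left(\left(\Delta^*u_n\right)^n\right)$ is *bounded in the sense required by Lemma \ref{one-sided} (note that Remark \ref{remark1} warns that *boundedness and boundedness differ in general, so this small verification is genuinely needed, but it goes through because the *limit here is $1$ and *balls around $1$ are ordinary neighbourhoods of $1$).

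Having established *boundedness of $\left(\left(\Delta^*u_n\right)^n\right)$, I invoke Corollary \ref{cor1}, whose hypotheses are exactly $(p_n)\in SV\!A_+$, $(\bar{G},p)$-*convergence of $(u_n)$ to $a$, and *boundedness of $\left(\left(\Delta^*u_n\right)^n\right)$, to conclude $u_n\stackrel{*}{\to}a$. I do not expect any real obstacle in this proof; the only subtlety is the aforementioned boundedness step, which one might be tempted to skip by citing ``*convergent implies *bounded'' without checking it, whereas Remark \ref{remark1} explicitly cautions that this implication is not automatic for arbitrary *convergent sequences without revisiting the definition.
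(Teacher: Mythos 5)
Your proposal is correct and follows essentially the same route the paper intends: the paper states Corollary \ref{cor2} as an immediate consequence of Lemma \ref{one-sided} and Corollary \ref{slowly} (equivalently, of Corollary \ref{cor1}), with the only link being that $\left(\Delta^*u_n\right)^n\stackrel{*}{\to}1$ forces *boundedness of $\left(\left(\Delta^*u_n\right)^n\right)$, which you verify explicitly. One minor quibble: the verification is not really special to the limit being $1$ --- *convergence to any $a\in\mathbb{R}^+$ implies *boundedness via $|u_n|^*=d^*(u_n,1)\leq d^*(u_n,a)\,d^*(a,1)$, and Remark \ref{remark1} only warns that ordinary boundedness does not imply *boundedness --- but your argument is sound as written.
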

\begin{remark}
In view of Remark \ref{remark1}, "*convergent" can be replaced by "convergent" in the theorems and corollaries of this section.
\end{remark}
\section{Convergence of sequences of IFNs}\label{section4}
Authors have done many studies concerning intuitionistic fuzzy sets\cite{type1,mursaleen1,mursaleen2,mursaleen3,type2,type3,type4,type5,zhang,alcantud1,alcantud2,alcantud3,type6}. Among them, Lei and Xu\cite{type1} are the first to define convergence of sequences of IFNs by using subtraction operation.
\begin{definition}\label{partialconvergence}\cite{type1}
Let $(\alpha_n)$ be an addition sequence of IFN $\xi$. If $\forall\bar{\varepsilon}>_{\mathsmaller{L}}(0,1)$ there is a positive integer $n_0$ such that
\begin{eqnarray*}
\alpha_n\ominus\xi<_{\mathsmaller{L}}\bar{\varepsilon}
\end{eqnarray*}
for $n>n_0$, then $\xi$ is the addition limit of $(\alpha_n)$.
\end{definition}
In this definition, Lei and Xu\cite{type1} implemented the assumption that $(\alpha_n)$ is an addition sequence of $\xi$ in order to guarantee $\alpha_n\ominus\xi$ to be an IFN, by which we mean $\left(\frac{\mu_{n}-\mu_{\xi}}{1-\mu_{\xi}},\frac{\nu_{n}}{\nu_{\xi}}\right)$ is an IFN. Besides, they proved that this convergence, under the same assumption, satisfies Theorem \ref{levels+} and Theorem \ref{levelsx} which are very useful in calculation of limits of sequences of IFNs. However, there are many sequences of IFNs which are not addition sequences of the limit points as in Example \ref{nonunique} and hence Definition \ref{partialconvergence} is not applicable to such sequences. Zhang and Xu\cite{zhang} removed the assumption on the sequence and defined following convergence by using the addition operation and the order relation given in Definition \ref{totalorder}.
\begin{definition}\label{totalconvergence}\cite{zhang}
Let $(\alpha_n)$ be a sequence of IFNs and $\xi$ be an IFN. Sequence $(\alpha_n)$ is said to be convergent  to $\xi$ if for any IFN $\bar{\varepsilon}$, there exists $n_0\in\mathbb{N}$ such that
\begin{eqnarray*}
\begin{cases}
\alpha_n<\xi\oplus\bar{\varepsilon} & \alpha_n>\xi\\
\xi<\alpha_n\oplus\bar{\varepsilon} & \alpha_n<\xi
\end{cases}
\end{eqnarray*}
hold for $n>n_0$.
\end{definition}
In Definition \ref{totalconvergence}, there is no assumption on the sequence but the limit is not unique. Besides, this type of convergence does not satisfy Theorem \ref{levels+} and Theorem \ref{levelsx} which are useful theorems. These can be seen in the following example.
\begin{example}\label{nonunique}
Consider the sequence of IFNs defined by $\alpha_n=(\mu_n,\nu_n)=\left(\frac{1}{2}-\frac{1}{n+3},\frac{1}{3}-\frac{1}{n+3}\right)$.

{\it \underline{\bf Case1}(convergence by Definition \ref{partialconvergence}).} The only candidate for the limit value is $\xi_1=\left(\frac{1}{2},\frac{1}{3}\right)$, but $(\alpha_n)$ is not addition sequence of $\xi_1$ since $\left(\frac{\mu_{n}-\mu_{\xi_1}}{1-\mu_{\xi_1}},\frac{\nu_{n}}{\nu_{\xi_1}}\right)=\left(-\frac{2}{n+3},\frac{n}{n+3}\right)$ is not an IFN for any $n\in\mathbb{N}$.  Hence, Definition \ref{partialconvergence} is not applicable.

{\it \underline{\bf Case2}(convergence by Definition \ref{totalconvergence}).}

$(\alpha_n)$ converges to IFNs $\xi_1=\left(\frac{1}{2},\frac{1}{3}\right)$ and $\xi_2=\left(\frac{7}{12},\frac{5}{12}\right)$ in view of the facts that
\begin{eqnarray*}
\alpha_n<\xi_{1}<\alpha_n\oplus\bar{\varepsilon}\quad and\quad \alpha_n<\xi_{2}<\alpha_n\oplus\bar{\varepsilon}
\end{eqnarray*}
for any IFN $\bar{\varepsilon}\neq(0,1)$ and $n\in\mathbb{N}$. In fact, $(\alpha_n)$ has infinite number of limits since $(\alpha_n)$ converges to any IFN $\xi$ such that
\begin{eqnarray*}
\left\{\xi\mid \mu_{\xi}-\nu_{\xi}=\frac{1}{6}\ \text{and} \ \mu_{\xi}+\nu_{\xi}\geq\frac{5}{6}\right\}\cdot
\end{eqnarray*}
Hence, the limit is not unique. On the other hand, Theorem \ref{levels+} and Theorem \ref{levelsx} are not satisfied since $\lim_{n\to\infty}\alpha_n=\xi_2$ but $\lim_{n\to\infty}\mu_n\neq7/12$ and $\lim_{n\to\infty}\nu_n\neq5/12$.
\end{example}
In this section, following \cite{type1} and \cite{zhang}, we first define the concepts of $^{\oplus}$convergence and $^{\otimes}$convergence for sequences of IFNs by means of the partial order given in Definition \ref{partialorder}. Then, we apply the results of Section \ref{section3} in order to achieve convergence in intuitionistic fuzzy number space.
\begin{definition}
Let $(\alpha_n)$ be a sequence of IFNs and $\xi$ be an IFN. Sequence $(\alpha_n)$ is said to be $^{\oplus}$convergent  to $\xi$ if for any IFN $\bar{\varepsilon}=(\varepsilon,1-\varepsilon)>_{\mathsmaller{L}}(0,1)$, there exists $n_0\in\mathbb{N}$ such that
\begin{eqnarray*}
\alpha_n<_{\mathsmaller{L}}\xi\oplus\bar{\varepsilon}\quad and \quad\xi<_{\mathsmaller{L}}\alpha_n\oplus\bar{\varepsilon}
\end{eqnarray*}
hold for $n>n_0$.
\end{definition}
\begin{theorem}\label{levels+}
A sequence $(\alpha_n)$ of IFNs $^{\oplus}$converges  to an IFN $\xi\!\!<_{\mathsmaller{L}}\!\!\!(1,0)$ if and only if $\lim_{n\to\infty}\mu_n=\mu_{\xi}$ and $\lim_{n\to\infty}\nu_n=\nu_{\xi}$.
\end{theorem}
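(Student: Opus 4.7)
The plan is to unfold the definition of $^{\oplus}$convergence into four scalar inequalities and show it is equivalent to componentwise convergence of $\mu_n$ and $\nu_n$. Writing $\bar\varepsilon=(\varepsilon,1-\varepsilon)$ with $\varepsilon\in(0,1)$, the formula $\xi\oplus\bar\varepsilon=(1-(1-\mu_\xi)(1-\varepsilon),\nu_\xi(1-\varepsilon))=(\mu_\xi+(1-\mu_\xi)\varepsilon,\nu_\xi(1-\varepsilon))$ (and similarly for $\alpha_n\oplus\bar\varepsilon$), combined with the definition of $<_{\mathsmaller{L}}$, translates the two IFN inequalities $\alpha_n<_{\mathsmaller{L}}\xi\oplus\bar\varepsilon$ and $\xi<_{\mathsmaller{L}}\alpha_n\oplus\bar\varepsilon$ into the four scalar conditions
\[
\mu_n-\mu_\xi<(1-\mu_\xi)\varepsilon,\quad \mu_\xi-\mu_n<(1-\mu_n)\varepsilon,\quad \nu_\xi-\nu_n<\nu_\xi\varepsilon,\quad \nu_n-\nu_\xi<\nu_n\varepsilon.
\]
Here I would explicitly record that the hypothesis $\xi<_{\mathsmaller{L}}(1,0)$ forces $\mu_\xi<1$ and $\nu_\xi>0$, which is exactly what keeps the factors $(1-\mu_\xi)\varepsilon$ and $\nu_\xi\varepsilon$ strictly positive.

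For the direction ($\Leftarrow$), assume $\mu_n\to\mu_\xi$ and $\nu_n\to\nu_\xi$ and fix $\varepsilon\in(0,1)$. The four quantities $(1-\mu_\xi)\varepsilon$, $(1-\mu_\xi)\varepsilon$, $\nu_\xi\varepsilon$, $\nu_\xi\varepsilon$ are the limits (respectively, positive lower bounds, using $\mu_n\to\mu_\xi<1$ and $\nu_n\to\nu_\xi>0$) of the right-hand sides of the four inequalities, while $\mu_n-\mu_\xi$, $\mu_\xi-\mu_n$, $\nu_\xi-\nu_n$, $\nu_n-\nu_\xi$ all tend to $0$. Hence each inequality holds for $n$ large and $^{\oplus}$convergence follows.

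For the direction ($\Rightarrow$), assume $^{\oplus}$convergence and let $\eta>0$. Choose $\varepsilon=\min\{\tfrac12,\eta,\eta/(1-\mu_\xi),\eta/\nu_\xi\}$, which is positive precisely because $\mu_\xi<1$ and $\nu_\xi>0$. Applying the definition with this $\bar\varepsilon=(\varepsilon,1-\varepsilon)$ yields, for all large $n$, the four inequalities above; the first gives $\mu_n-\mu_\xi<\eta$, the second $\mu_\xi-\mu_n<(1-\mu_n)\varepsilon\le\varepsilon\le\eta$, the third $\nu_\xi-\nu_n<\eta$, and the fourth $\nu_n-\nu_\xi<\nu_n\varepsilon\le\varepsilon\le\eta$. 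Thus $|\mu_n-\mu_\xi|<\eta$ and $|\nu_n-\nu_\xi|<\eta$, proving the convergence of the components. The only subtlety I anticipate is the bookkeeping with the mixed factors $(1-\mu_n)$ versus $(1-\mu_\xi)$ and $\nu_n$ versus $\nu_\xi$; bounding each above by $1$ (or, in the forward direction, by a constant once $\mu_n,\nu_n$ are close to their limits) sidesteps this cleanly.
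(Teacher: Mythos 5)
Your proposal is correct and follows essentially the same route as the paper's proof: both unfold $\alpha_n<_{\mathsmaller{L}}\xi\oplus\bar{\varepsilon}$ and $\xi<_{\mathsmaller{L}}\alpha_n\oplus\bar{\varepsilon}$ into the same four scalar inequalities, use $\xi<_{\mathsmaller{L}}(1,0)$ (i.e.\ $\mu_{\xi}<1$, $\nu_{\xi}>0$) to keep the relevant bounds strictly positive, and handle the mixed factors $(1-\mu_n)$ and $\nu_n$ via eventual positive lower bounds in the sufficiency direction (the paper makes these concrete as $\tfrac{1-\mu_{\xi}}{2}$ and $\tfrac{\nu_{\xi}}{2}$) and via the trivial bound by $1$ in the necessity direction. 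The only cosmetic difference is that your choice $\varepsilon=\min\{\tfrac12,\eta,\eta/(1-\mu_{\xi}),\eta/\nu_{\xi}\}$ is more cautious than needed, since $1-\mu_{\xi}\le1$ and $\nu_{\xi}\le1$ already give $(1-\mu_{\xi})\varepsilon\le\varepsilon$ and $\nu_{\xi}\varepsilon\le\varepsilon$.
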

\begin{proof}
{\it Necessity.} Suppose $(\alpha_n)$ $^{\oplus}$converges to $\xi$. Let $\varepsilon>0$ be given. Then, for $n>n_0(\varepsilon)$ we have
\begin{eqnarray*}
\mu_{n}<1-(1-\mu_{\xi})(1-\varepsilon)\quad and \quad  \mu_{\xi}<1-(1-\mu_{n})(1-\varepsilon)
\end{eqnarray*}
and
\begin{eqnarray*}
\nu_{\xi}(1-\varepsilon)<\nu_{n}\quad and \quad \nu_n(1-\varepsilon)<\nu_{\xi}.
\end{eqnarray*}
Since $\varepsilon>0$ is arbitrary, this implies $\lim_{n\to\infty}\mu_n=\mu_{\xi}$ and $\lim_{n\to\infty}\nu_n=\nu_{\xi}$.

{\it Sufficiency.} Let $\lim_{n\to\infty}\mu_n=\mu_{\xi}$ and $\lim_{n\to\infty}\nu_n=\nu_{\xi}$. For given $\varepsilon>0$ followings hold:

$(i)$ There exists $n_1\in\mathbb{N}$ such that $\mu_n-\mu_{\xi}<\varepsilon(1-\mu_{\xi})$ and $\nu_{\xi}-\nu_n<\varepsilon\nu_{\xi}$ for $n>n_1$ and these imply $\mu_{n}<1-(1-\mu_{\xi})(1-\varepsilon)$ and $\nu_{\xi}(1-\varepsilon)<\nu_{n}$, respectively. Hence, we have $\alpha_n<_{\mathsmaller{L}}\xi\oplus\bar{\varepsilon}$ for $n>n_1$.

$(ii)$ By the assumption $\xi<_{\mathsmaller{L}}(1,0)$ we have $\mu_{\xi}\neq1$ and $\nu_{\xi}\neq0$ and so there exists $n_2\in\mathbb{N}$ such that $\mu_n<\mu_{\xi}+\frac{1-\mu_{\xi}}{2}=\frac{\mu_{\xi}+1}{2}$ and $\nu_n>\nu_{\xi}-\frac{\nu_{\xi}}{2}=\frac{\nu_{\xi}}{2}$ for $n>n_2$. On the other hand, there exists $n_3\in\mathbb{N}$ such that $\mu_{\xi}-\mu_n<\varepsilon(1-\frac{\mu_{\xi}+1}{2})$ and $\nu_n-\nu_{\xi}<\varepsilon\frac{\nu_{\xi}}{2}$ for $n>n_3$. These imply $\mu_{\xi}-\mu_n<\varepsilon(1-\mu_n)$ and $\nu_n-\nu_{\xi}<\varepsilon\nu_n$ for $n>\max\{n_2,n_3\}$. Hence, for $n>\max\{n_2,n_3\}$ we have $\mu_{\xi}<1-(1-\mu_{n})(1-\varepsilon)$ and $\nu_{n}(1-\varepsilon)<\nu_{\xi}$ which implies $\xi<_{\mathsmaller{L}}\alpha_n\oplus\bar{\varepsilon}$.

From $(i)$ and $(ii)$, we conclude that
\begin{eqnarray*}
\alpha_n<_{\mathsmaller{L}}\xi\oplus\bar{\varepsilon}\quad and \quad\xi<_{\mathsmaller{L}}\alpha_n\oplus\bar{\varepsilon}
\end{eqnarray*}
for $n>n_0=\max\{n_1,n_2,n_3\}$ which completes the proof.
\end{proof}
Now we give $^\otimes$convergence for sequences of IFNs.
\begin{definition}
Let $(\alpha_n)$ be a sequence of IFNs and $\xi$ be an IFN. Sequence $(\alpha_n)$ is said to be $^\otimes$convergent to $\xi$ if for any IFN $\bar{\varepsilon}=(1-\varepsilon,\varepsilon)<_{\mathsmaller{L}}(1,0)$, there exists $n_0\in\mathbb{N}$ such that
\begin{eqnarray*}
\alpha_n\otimes\bar{\varepsilon}<_{\mathsmaller{L}}\xi\quad and \quad\xi\otimes\bar{\varepsilon}<_{\mathsmaller{L}}\alpha_n
\end{eqnarray*}
hold for $n>n_0$.
\end{definition}
\begin{theorem}\label{levelsx}
A sequence $(\alpha_n)$ of IFNs $^\otimes$converges to an IFN $\xi\!\!>_{\mathsmaller{L}}\!\!\!(0,1)$ if and only if $\lim_{n\to\infty}\mu_n=\mu_{\xi}$ and $\lim_{n\to\infty}\nu_n=\nu_{\xi}$.
\end{theorem}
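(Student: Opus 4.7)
The plan is to mirror the proof of Theorem \ref{levels+}, with the multiplicative operation $\otimes$ taking the role of $\oplus$ and the non-degeneracy hypothesis $\xi >_{\mathsmaller{L}} (0,1)$ replacing $\xi <_{\mathsmaller{L}} (1,0)$: the new hypothesis supplies $\mu_{\xi}>0$ and $1-\nu_{\xi}>0$, which are precisely the positive quantities that must stay bounded away from zero in the estimates.

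First I would expand the definition componentwise. Using $\alpha\otimes\bar{\varepsilon}=(\mu_{\alpha}(1-\varepsilon),\,1-(1-\nu_{\alpha})(1-\varepsilon))$, the two comparisons $\alpha_n\otimes\bar{\varepsilon}<_{\mathsmaller{L}}\xi$ and $\xi\otimes\bar{\varepsilon}<_{\mathsmaller{L}}\alpha_n$ unfold into the four scalar inequalities
\begin{equation*}
\mu_\xi(1-\varepsilon)<\mu_n,\quad \mu_n(1-\varepsilon)<\mu_\xi,\quad (1-\nu_\xi)(1-\varepsilon)<1-\nu_n,\quad (1-\nu_n)(1-\varepsilon)<1-\nu_\xi.
\end{equation*}
The $\mu$-pair and the $(1-\nu)$-pair have identical algebraic shape, so each direction of the theorem reduces to running the same one-variable estimate twice.

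For necessity, I would fix $\varepsilon\in(0,1)$ and read off from the $\mu$-pair the bounds $\mu_\xi-\mu_n<\mu_\xi\varepsilon$ and $\mu_n-\mu_\xi<\mu_n\varepsilon\leq\varepsilon$, giving $|\mu_n-\mu_\xi|<\max(\mu_\xi,1)\varepsilon$; letting $\varepsilon\to 0^{+}$ yields $\mu_n\to\mu_\xi$, and the analogous estimate on the $(1-\nu)$-pair yields $\nu_n\to\nu_\xi$. For sufficiency, given $\varepsilon\in(0,1)$, I would use $\mu_n\to\mu_\xi>0$ to choose $n_1$ so that $\mu_n>\mu_\xi/2$ and $|\mu_n-\mu_\xi|<\mu_\xi\varepsilon/2$ for $n>n_1$; these two facts together produce both inequalities of the $\mu$-pair. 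The same argument applied to $1-\nu_n$, $1-\nu_\xi$ and the hypothesis $1-\nu_\xi>0$ supplies the $(1-\nu)$-pair from some $n_2$ on, and $n_0=\max\{n_1,n_2\}$ finishes the proof.

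The only genuinely delicate step is the reverse inequality $\mu_n(1-\varepsilon)<\mu_\xi$, whose ``gap'' is $\mu_n\varepsilon$ with $\mu_n$ (not $\mu_\xi$) on the right. Producing a uniform threshold therefore requires first lower-bounding $\mu_n$ away from zero, and this is exactly where $\mu_\xi>0$ (from $\xi>_{\mathsmaller{L}}(0,1)$) enters; the $(1-\nu)$-analogue uses $1-\nu_\xi>0$ in the identical way. Without this hypothesis the inequality can fail: if $\mu_\xi=0$ then $\mu_n\to0$ and the inequality forces $\mu_n<\mu_n\varepsilon$ with $\varepsilon<1$, impossible unless $\mu_n=0$. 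This is the only obstacle; everything else is routine bookkeeping parallel to the proof of Theorem \ref{levels+}.
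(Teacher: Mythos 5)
Your proposal is correct and follows essentially the same route as the paper: the necessity part unfolds the definition into the same four scalar inequalities and lets $\varepsilon\to 0^{+}$, while the sufficiency part is exactly the argument the paper invokes by reference to Theorem \ref{levels+} (swap the roles of $\mu$ and $\nu$, use $\xi>_{\mathsmaller{L}}(0,1)$ to bound $\mu_n$ and $1-\nu_n$ away from zero). Your explicit identification of where the non-degeneracy hypothesis is needed matches the corresponding step $(ii)$ in the paper's proof of Theorem \ref{levels+}.
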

\begin{proof}
{\it Necessity.} Suppose $(\alpha_n)$ $^\otimes$converges to $\xi$. Let $\varepsilon>0$ be given. Then, for $n>n_0(\varepsilon)$ we have
\begin{eqnarray*}
\mu_{n}(1-\varepsilon)<\mu_{\xi}\quad and \quad  \mu_{\xi}(1-\varepsilon)<\mu_{n}
\end{eqnarray*}
and
\begin{eqnarray*}
\nu_{\xi}<1-(1-\nu_n)(1-\varepsilon)\quad and \quad \nu_{n}<1-(1-\nu_{\xi})(1-\varepsilon).
\end{eqnarray*}
Since $\varepsilon>0$ is arbitrary, this implies $\lim_{n\to\infty}\mu_n=\mu_{\xi}$ and $\lim_{n\to\infty}\nu_n=\nu_{\xi}$.

{\it Sufficiency.} The proof can be done similar to sufficiency part of the proof of Theorem 16 by changing the roles of $\mu$ and $\nu$, and replacing the operation $\oplus$ by the operation $\otimes$.
\end{proof}
\begin{remark}
If the limit exists by $^\oplus$convergence, then it is unique by Theorem \ref{levels+}. Similar case is also valid for $^\otimes$convergence by Theorem \ref{levelsx}. As an example, $^{\oplus}$convergence and $^{\otimes}$convergence work in Example \ref{nonunique} with unique limit $\xi_1$.
\end{remark}
\begin{remark}
We note that if $(0,1)<_{\mathsmaller{L}}\xi<_{\mathsmaller{L}}(1,0)$, then $^{\oplus}$convergence and $^{\otimes}$convergence are equivalent in intuitionistic fuzzy number space.
\end{remark}
\subsection{Convergence via weighted arithmetic means}
In some cases of sequences of IFNs as in Example \ref{ex3}, $^{\oplus}$convergence may fail in intuitionistic fuzzy number space. In such cases we may use weighted arithmetic means to grasp a limit. In this subsection we assume $\beta<_{\mathsmaller{L}}(1,0)$ for any IFN $\beta$.
\begin{definition}
Let $(\alpha_n)$ be a sequence of IFNs and sequence $(p_n)$ of nonnegative real numbers satisfying \eqref{p}. Then, sequence of weighted arithmetic means of $(\alpha_n)$ is defined by
\begin{eqnarray*}
t_n=\frac{1}{P_n}\bigoplus_{k=0}^np_k\alpha_k\qquad\qquad \qquad\qquad(n=0,1,2,....)
\end{eqnarray*}
Sequence $(\alpha_n)$ is said to be $^{\oplus}$convergent by weighted arithmetic mean method, or $(\bar{N},p)-^{\oplus}$convergent, to IFN $\xi$ if $(t_n)$ $^{\oplus}$converges to $\xi$.
\end{definition}
We note that $(t_n)$ is, in fact, an infinite sequence of intuitionistic fuzzy weighted averaging operators (IFWA) defined by  Xu\cite{xu}.
\begin{theorem}\label{w-levels}
A sequence $(\alpha_n)$ of IFNs is $(\bar{N},p)-^{\oplus}$convergent to an IFN $\xi$ if and only if $\lim_{n\to\infty}w(1-\mu_n)=1-\mu_{\xi}$ and $\lim_{n\to\infty}w(\nu_n)=\nu_{\xi}$, where $w_n$ is weighted geometric mean operator in \eqref{w}.
\end{theorem}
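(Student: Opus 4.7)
My plan is to reduce the theorem to Theorem \ref{levels+} by computing the sequence $(t_n)$ of weighted arithmetic means explicitly in terms of the weighted geometric mean operator $w$ applied coordinate-wise. Specifically, I aim to show the identity
\[
t_n = \bigl(1 - w(1-\mu_n),\ w(\nu_n)\bigr),
\]
so that the membership and non-membership coordinates of $t_n$ are precisely $1 - w(1-\mu_n)$ and $w(\nu_n)$, at which point the claimed equivalence drops out of Theorem \ref{levels+} applied to $(t_n)$.

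To carry out the computation, I would first apply the scalar multiplication rule from Definition \ref{substraction}(4) to write
\[
p_k \alpha_k = \bigl(1-(1-\mu_k)^{p_k},\ \nu_k^{p_k}\bigr),
\]
which is valid in view of the standing assumption $\alpha_k <_{\mathsmaller{L}} (1,0)$. Then, by induction on $n$ using the $\oplus$-formula in Definition \ref{substraction}(1), I obtain
\[
\bigoplus_{k=0}^n p_k \alpha_k = \left(1-\prod_{k=0}^n (1-\mu_k)^{p_k},\ \prod_{k=0}^n \nu_k^{p_k}\right).
\]
Multiplying the result by the scalar $1/P_n$ via Definition \ref{substraction}(4) yields
\[
t_n = \left(1 - \left(\prod_{k=0}^n (1-\mu_k)^{p_k}\right)^{1/P_n},\ \left(\prod_{k=0}^n \nu_k^{p_k}\right)^{1/P_n}\right),
\]
which by the definition \eqref{w} of $w$ is the desired identity. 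Along the way I would verify that each $t_n$ really is an IFN, i.e. that $\mu_{t_n}+\nu_{t_n}\leq 1$; this follows from $\nu_k\leq 1-\mu_k$ and the monotonicity of the weighted geometric mean in its entries.

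With this identity in hand, the equivalence asserted in the theorem is immediate: by Theorem \ref{levels+}, the sequence $(t_n)$ $^{\oplus}$-converges to $\xi$ (which satisfies $\xi <_{\mathsmaller{L}}(1,0)$ by the standing convention of the subsection) if and only if $\mu_{t_n} \to \mu_\xi$ and $\nu_{t_n} \to \nu_\xi$, and by the identity above these two conditions read $w(1-\mu_n) \to 1-\mu_\xi$ and $w(\nu_n) \to \nu_\xi$, respectively. The only step requiring genuine care is the inductive verification of the formula for $\bigoplus_{k=0}^n p_k \alpha_k$ together with the check that $t_n$ remains an IFN; once these are in place there is no real obstacle, as the rest is a straightforward invocation of Theorem \ref{levels+}.
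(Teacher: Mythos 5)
Your proposal is correct and follows essentially the same route as the paper: both compute $t_n=\left(1-w(1-\mu_n),\,w(\nu_n)\right)$ from the $\oplus$ and scalar-multiplication rules and then invoke Theorem \ref{levels+}. The only difference is that you spell out the induction and the check that $t_n$ is an IFN, which the paper leaves implicit.
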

\begin{proof}
Let $(\alpha_n)$ be a sequence of IFNs. We have
\begin{eqnarray*}
t_n=\frac{1}{P_n}\bigoplus_{k=0}^np_k\alpha_k=\left(1-\left\{\prod_{k=0}^n(1-\mu_k)^{p_k}\right\}^{1/P_n},\left\{\prod_{k=0}^n\nu_k^{p_k}\right\}^{1/P_n}\right)=\left(1-w(1-\mu_n),w(\nu_n)\right).
\end{eqnarray*}
By Theorem \ref{levels+}, sequence $(t_n)$ $^{\oplus}$converges to $\xi$ if and only if $\lim_{n\to\infty}w(1-\mu_n)=1-\mu_{\xi}$ and $\lim_{n\to\infty}w(\nu_n)=\nu_{\xi}$. Hence, the proof is completed.
\end{proof}
\begin{theorem}\label{ifn-regular}
If sequence $(\alpha_n)$ of IFNs is $^{\oplus}$convergent to an IFN $\xi$, then it is $(\bar{N},p)-^{\oplus}$convergent to $\xi$.
\end{theorem}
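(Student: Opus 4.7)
The plan is to reduce the statement to the scalar-valued regularity Theorem \ref{regular} by passing through Theorems \ref{levels+} and \ref{w-levels}, which translate convergence statements for sequences of IFNs into convergence statements for their coordinate sequences.

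First I would invoke Theorem \ref{levels+}: the hypothesis that $(\alpha_n)$ is $^{\oplus}$convergent to $\xi$ is equivalent to the coordinatewise scalar limits $\lim_{n\to\infty}\mu_n=\mu_\xi$ and $\lim_{n\to\infty}\nu_n=\nu_\xi$. The standing assumption $\beta<_{\mathsmaller{L}}(1,0)$ for every IFN $\beta$ in this subsection guarantees $1-\mu_n>0$, $\nu_n>0$, $1-\mu_\xi>0$, and $\nu_\xi>0$, so $(1-\mu_n)$ and $(\nu_n)$ are genuine sequences in $\mathbb{R}^+$ converging, respectively, to $1-\mu_\xi\in\mathbb{R}^+$ and $\nu_\xi\in\mathbb{R}^+$.

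Next, by Remark \ref{remark1} ordinary convergence in $\mathbb{R}^+$ coincides with $*$convergence, so $1-\mu_n\stackrel{*}{\to}1-\mu_\xi$ and $\nu_n\stackrel{*}{\to}\nu_\xi$. Applying Theorem \ref{regular} separately to the two positive scalar sequences $(1-\mu_n)$ and $(\nu_n)$ yields $w(1-\mu_n)\stackrel{*}{\to}1-\mu_\xi$ and $w(\nu_n)\stackrel{*}{\to}\nu_\xi$, which (via Remark \ref{remark1} once more) say $\lim_{n\to\infty}w(1-\mu_n)=1-\mu_\xi$ and $\lim_{n\to\infty}w(\nu_n)=\nu_\xi$. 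A single appeal to the ``if'' direction of Theorem \ref{w-levels} then concludes that $(\alpha_n)$ is $(\bar{N},p)-^{\oplus}$convergent to $\xi$, which is what we wanted.

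I do not anticipate a genuine obstacle: all the analytic content is already encapsulated in Theorem \ref{regular}, and what remains is purely translational between IFN coordinates and their weighted geometric means. The only point requiring mild care is verifying the positivity hypotheses demanded by Theorem \ref{regular}, but these are built into the standing assumption $\beta<_{\mathsmaller{L}}(1,0)$ that opens the subsection.
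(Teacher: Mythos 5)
Your proposal is correct and follows essentially the same route as the paper's own proof: reduce via Theorem \ref{levels+} to the coordinate sequences $(1-\mu_n)$ and $(\nu_n)$, apply the scalar regularity Theorem \ref{regular} to their weighted geometric means, and conclude with Theorem \ref{w-levels}. The only difference is that you spell out the positivity checks and the passage between ordinary convergence and $*$convergence via Remark \ref{remark1}, which the paper leaves implicit.
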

\begin{proof}
Let $(\alpha_n)$ $^{\oplus}$converge to $\xi$. From Theorem \ref{levels+} we have $\lim_{n\to\infty}(1-\mu_n)=1-\mu_{\xi}$ and $\lim_{n\to\infty}\nu_n=\nu_{\xi}$. Then, from Theorem \ref{regular} we have $\lim_{n\to\infty}w(1-\mu_n)=1-\mu_{\xi}$ and $\lim_{n\to\infty}w(\nu_n)=\nu_{\xi}$ which implies $(\alpha_n)$ is $(\bar{N},p)-^{\oplus}$convergent to $\xi$ in view of Theorem \ref{w-levels}.
\end{proof}
$(\bar{N},p)-^{\oplus}$convergence of a sequence of IFNs does not imply $^{\oplus}$convergence by the following example.
\begin{example}\label{ex3}
Consider sequence $(\alpha_n)$ of IFNs defined by
\begin{eqnarray*}
\alpha_n=(\mu_n,\nu_n)=\left(1-\left(\frac{1}{2}\right)^{(-1)^n+2},\left(\frac{1}{3}\right)^{(-1)^n+2}\right).
\end{eqnarray*}
$(\alpha_n)$ is not $^{\oplus}$convergent by Theorem \ref{levels+}. But, it is $(\bar{N},p)-^{\oplus}$convergent with $p_n=1$ to IFN $\xi=(3/4,1/9)$ by the facts that
\begin{eqnarray*}
\lim_{n\to\infty}w(1-\mu_n)=\lim_{n\to\infty}\left\{\prod_{k=0}^n\left(\frac{1}{2}\right)^{(-1)^k+2}\right\}^{\frac{1}{n+1}}=\frac{1}{4}=1-\mu_{\xi}
\end{eqnarray*}
and
\begin{eqnarray*}
\lim_{n\to\infty}w(\nu_n)=\lim_{n\to\infty}\left\{\prod_{k=0}^n\left(\frac{1}{3}\right)^{(-1)^k+2}\right\}^{\frac{1}{n+1}}=\frac{1}{9}=\nu_{\xi}.
\end{eqnarray*}
in view of Theorem \ref{w-levels}.
\end{example}
In view of Theorems \ref{main},\ref{levels+},\ref{w-levels} and Corollaries \ref{slowly},\ref{cor1},\ref{cor2}; we give the conditions under which $(\bar{N},p)-^{\oplus}$convergence implies $^{\oplus}$convergence in intuitionistic fuzzy number space.
\begin{theorem}
Let $(p_n)\in SV\!A_+$. If sequence $(\alpha_n)$ of IFNs is $(\bar{N},p)-^{\oplus}$convergent to an IFN $\xi$, then $(\alpha_n)$ is $^{\oplus}$convergent to $\xi$ if and only if one of the following two conditions hold:
\begin{eqnarray*}
\liminf_{\ \lambda\rightarrow 1^+}\limsup_{n\rightarrow \infty}\left\{\left|\prod_{k=n+1}^{\lambda_n}\left(\frac{1-\mu_k}{1-\mu_n}\right)^{p_k}\right|^*\right\}^{\frac{1}{P_{\lambda_n}-P_n}}=1,\ \ \liminf_{\ \lambda\rightarrow 1^+}\limsup_{n\rightarrow \infty}\left\{\left|\prod_{k=n+1}^{\lambda_n}\left(\frac{\nu_k}{\nu_n}\right)^{p_k}\right|^*\right\}^{\frac{1}{P_{\lambda_n}-P_n}}=1
\end{eqnarray*}
or
\begin{eqnarray*}
\liminf_{\ \lambda\rightarrow 1^-}\limsup_{n\rightarrow \infty}\left\{\left|\prod_{k=\lambda_n+1}^{n}\left(\frac{1-\mu_n}{1-\mu_k}\right)^{p_k}\right|^*\right\}^{\frac{1}{P_n-P_{\lambda_n}}}=1, \  \  \liminf_{\ \lambda\rightarrow 1^-}\limsup_{n\rightarrow \infty}\left\{\left|\prod_{k=\lambda_n+1}^{n}\left(\frac{\nu_n}{\nu_k}\right)^{p_k}\right|^*\right\}^{\frac{1}{P_n-P_{\lambda_n}}}=1.
\end{eqnarray*}
\end{theorem}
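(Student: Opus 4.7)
The plan is to reduce the theorem to two separate applications of Theorem \ref{main} in $\mathbb{R}^+$, one for the sequence of ``membership shortfalls'' $(1-\mu_n)$ and one for the sequence of non-memberships $(\nu_n)$. The assumption on IFNs in this subsection ($\beta<_{\mathsmaller{L}}(1,0)$, i.e.\ $\mu_\beta<1$ and $\nu_\beta>0$) guarantees that both $(1-\mu_n)$ and $(\nu_n)$ live in $\mathbb{R}^+$, so the machinery of Section \ref{section3} is available; similarly $\xi<_{\mathsmaller{L}}(1,0)$ gives $1-\mu_\xi>0$ and $\nu_\xi>0$.

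First I would invoke Theorem \ref{w-levels} to rewrite the hypothesis that $(\alpha_n)$ is $(\bar N,p)-^\oplus$convergent to $\xi$ as the pair of statements
\[
w(1-\mu_n)\to 1-\mu_\xi,\qquad w(\nu_n)\to \nu_\xi,
\]
that is, the positive real sequences $(1-\mu_n)$ and $(\nu_n)$ are $(\bar G,p)-$*convergent to $1-\mu_\xi$ and $\nu_\xi$, respectively. Similarly, by Theorem \ref{levels+}, the desired conclusion that $(\alpha_n)$ is $^\oplus$convergent to $\xi$ is equivalent to saying that $(1-\mu_n)$ and $(\nu_n)$ converge (hence, by Remark \ref{remark1}, *converge) to $1-\mu_\xi$ and $\nu_\xi$. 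Thus the entire statement reduces to: under the $(\bar G,p)-$*convergence just noted and $(p_n)\in SV\!A_+$, both $(1-\mu_n)$ and $(\nu_n)$ *converge if and only if the stated pair of conditions hold.

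Next I would apply Theorem \ref{main} twice, once with $u_n=1-\mu_n$ and once with $u_n=\nu_n$. For sufficiency, if the first listed pair of conditions holds, then condition \eqref{con1} of Theorem \ref{main} is satisfied for each of $(1-\mu_n)$ and $(\nu_n)$, so both sequences *converge to their weighted-geometric-mean limits; the second listed pair handles \eqref{con2} analogously. For necessity, if $(\alpha_n)$ is $^\oplus$convergent to $\xi$ then both $(1-\mu_n)$ and $(\nu_n)$ are *convergent, and the necessity half of Theorem \ref{main} shows that \emph{both} \eqref{con1} and \eqref{con2} hold for each sequence individually; in particular, the first listed pair (and also the second) in the present theorem is satisfied.

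The argument is essentially a clean packaging of earlier results, so I do not expect a genuine obstacle. The only delicate points are (i) checking that ``one of the two pairs'' is the correct logical quantifier to state, which is justified precisely because necessity in Theorem \ref{main} delivers both conditions while sufficiency needs only one, and (ii) making sure the positivity hypotheses needed to form the multiplicative absolute values $\bigl|(1-\mu_k)/(1-\mu_n)\bigr|^*$ and $|\nu_k/\nu_n|^*$ are in force, which is handled by the standing assumption $\beta<_{\mathsmaller{L}}(1,0)$ and by $\xi<_{\mathsmaller{L}}(1,0)$. With these checks in place, the proof is a short invocation of Theorems \ref{w-levels}, \ref{levels+}, and \ref{main}.
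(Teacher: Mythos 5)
Your proposal is correct and follows exactly the route the paper intends: the paper states this theorem without a written proof, presenting it as an immediate consequence of Theorems \ref{w-levels}, \ref{levels+} and \ref{main}, which is precisely the reduction you carry out (translating $(\bar{N},p)-^{\oplus}$convergence and $^{\oplus}$convergence into statements about the positive real sequences $(1-\mu_n)$ and $(\nu_n)$ and applying Theorem \ref{main} to each). Your added remarks on the disjunctive quantifier and on the positivity guaranteed by the standing assumption $\beta<_{\mathsmaller{L}}(1,0)$ are accurate and fill in the details the paper leaves implicit.
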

\begin{theorem}
Let $(p_n)\in SV\!A_+$. If sequence $(\alpha_n)$ of IFNs is $(\bar{N},p)-^{\oplus}$convergent to an IFN $\xi$ and sequences $(1-\mu_n)$, $(\nu_n)$ of real numbers are $*-$slowly oscillating, then $(\alpha_n)$ is $^{\oplus}$convergent to $\xi$.
\end{theorem}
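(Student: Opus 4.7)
The plan is to reduce the $^{\oplus}$convergence question for the IFN sequence $(\alpha_n)$ to two scalar *convergence questions in $\mathbb{R}^+$, and then invoke Corollary \ref{slowly} coordinate-wise. The standing subsection assumption $\beta<_{\mathsmaller{L}}(1,0)$ for every IFN $\beta$ guarantees that $1-\mu_n>0$ and $\nu_n>0$ for all $n$ (and likewise for $\xi$), so the multiplicative framework of Section \ref{section3} is directly applicable to the scalar sequences $(1-\mu_n)$ and $(\nu_n)$.

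First, I would translate the hypothesis that $(\alpha_n)$ is $(\bar{N},p)-^{\oplus}$convergent to $\xi$ into scalar terms using Theorem \ref{w-levels}: this yields the two limits $\lim_{n\to\infty}w(1-\mu_n)=1-\mu_{\xi}$ and $\lim_{n\to\infty}w(\nu_n)=\nu_{\xi}$, which is exactly the statement that the scalar sequences $(1-\mu_n)$ and $(\nu_n)$ are $(\bar{G},p)-$*convergent to $1-\mu_{\xi}$ and $\nu_{\xi}$ respectively. Next, since both scalar sequences are $*-$slowly oscillating by hypothesis and $(p_n)\in SV\!A_+$, I would apply Corollary \ref{slowly} to each of them separately, obtaining that $(1-\mu_n)$ is *convergent to $1-\mu_{\xi}$ and $(\nu_n)$ is *convergent to $\nu_{\xi}$. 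By Remark \ref{remark1}, *convergence coincides with ordinary convergence in $\mathbb{R}^+$, so in fact $\lim_{n\to\infty}\mu_n=\mu_{\xi}$ and $\lim_{n\to\infty}\nu_n=\nu_{\xi}$. Finally, Theorem \ref{levels+} (applicable because $\xi<_{\mathsmaller{L}}(1,0)$ by the standing assumption) converts these coordinate-wise limits back into the desired $^{\oplus}$convergence of $(\alpha_n)$ to $\xi$.

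There is no substantive analytic obstacle here: the result is essentially the composition of Theorem \ref{w-levels}, Corollary \ref{slowly} applied twice, and Theorem \ref{levels+}, assembled in the correct order. The only bookkeeping detail worth flagging is ensuring that the scalar sequences genuinely lie in $\mathbb{R}^+$ and that the target points $1-\mu_{\xi}$, $\nu_{\xi}$ are positive, both of which are immediate from the standing subsection hypothesis $\beta<_{\mathsmaller{L}}(1,0)$. Once these positivity checks are noted, the remaining argument is a one-line invocation of Corollary \ref{slowly} on each coordinate.
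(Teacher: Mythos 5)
Your proposal is correct and is precisely the argument the paper intends: the theorem is stated as an immediate consequence of Theorem \ref{w-levels}, Corollary \ref{slowly} applied coordinate-wise to $(1-\mu_n)$ and $(\nu_n)$, and Theorem \ref{levels+}, which is exactly your composition. The positivity bookkeeping you flag is also handled in the paper by the standing subsection assumption $\beta<_{\mathsmaller{L}}(1,0)$.
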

\begin{theorem}
Let $(p_n)\in SV\!A_+$. If sequence $(\alpha_n)$ of IFNs is $(\bar{N},p)-^{\oplus}$convergent to an IFN $\xi$ and sequences $\left(\Delta^*(1-\mu_n)\right)^n$, $\left(\Delta^*\nu_n\right)^n$ of real numbers are *bounded, then $(\alpha_n)$ is $^{\oplus}$convergent to $\xi$.
\end{theorem}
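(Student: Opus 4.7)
The plan is to reduce the statement to two applications of Corollary \ref{cor1} applied to the positive real sequences $(1-\mu_n)$ and $(\nu_n)$, and then reassemble at the IFN level via Theorem \ref{levels+}. By the standing assumption of the subsection that $\beta<_{\mathsmaller{L}}(1,0)$ for every IFN $\beta$, we have $\mu_n<1$ and $\nu_n>0$ (and likewise $\mu_\xi<1$, $\nu_\xi>0$), so both $(1-\mu_n)$ and $(\nu_n)$ genuinely live in $\mathbb{R}^+$ and the theory of Section \ref{section3} is applicable to them.

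First, I would unpack the hypothesis. By Theorem \ref{w-levels}, the assumption that $(\alpha_n)$ is $(\bar{N},p)-^{\oplus}$convergent to $\xi$ is equivalent to
\begin{eqnarray*}
\lim_{n\to\infty} w(1-\mu_n)=1-\mu_\xi \quad\text{and}\quad \lim_{n\to\infty} w(\nu_n)=\nu_\xi,
\end{eqnarray*}
where $w$ denotes the weighted geometric mean operator in \eqref{w}. In view of Remark \ref{remark1}, these ordinary limits in $\mathbb{R}^+$ are the same as the corresponding *limits, so the sequences $(1-\mu_n)$ and $(\nu_n)$ are each $(\bar{G},p)-$*convergent, to $1-\mu_\xi$ and $\nu_\xi$ respectively.

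Second, the hypothesis that $\bigl(\left(\Delta^*(1-\mu_n)\right)^n\bigr)$ and $\bigl(\left(\Delta^*\nu_n\right)^n\bigr)$ are *bounded places me exactly in the setting of Corollary \ref{cor1}, which I apply twice (once to $(1-\mu_n)$ and once to $(\nu_n)$) using that $(p_n)\in SV\!A_+$. This yields that $(1-\mu_n)$ *converges to $1-\mu_\xi$ and $(\nu_n)$ *converges to $\nu_\xi$. Invoking Remark \ref{remark1} again converts these back to ordinary limits, giving $\mu_n\to\mu_\xi$ and $\nu_n\to\nu_\xi$. Finally, the standing assumption $\xi<_{\mathsmaller{L}}(1,0)$ lets me apply Theorem \ref{levels+} to conclude that $(\alpha_n)$ is $^{\oplus}$convergent to $\xi$.

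There is no serious obstacle: the theorem is an assembly of three earlier results. The only points requiring care are bookkeeping ones, namely verifying that $(1-\mu_n)$ and $(\nu_n)$ are $\mathbb{R}^+$-valued (which is guaranteed by the subsection's blanket assumption that every IFN in play satisfies $\beta<_{\mathsmaller{L}}(1,0)$) and checking the direction of implication in Theorem \ref{w-levels} that transports the $(\bar{N},p)-^{\oplus}$ hypothesis to a pair of $(\bar{G},p)-$*convergence statements to which Corollary \ref{cor1} can be applied. Once those are in place, the proof is a two-line chain of citations.
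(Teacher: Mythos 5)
Your proof is correct and is exactly the assembly the paper intends: the paper states this theorem without a separate proof, noting only that it follows from Theorems \ref{main}, \ref{levels+}, \ref{w-levels} and Corollary \ref{cor1}, which is precisely the chain you spell out (Theorem \ref{w-levels} to translate the hypothesis, Corollary \ref{cor1} applied to $(1-\mu_n)$ and $(\nu_n)$, then Theorem \ref{levels+} to reassemble). Your bookkeeping remarks about positivity via the blanket assumption $\beta<_{\mathsmaller{L}}(1,0)$ and the use of Remark \ref{remark1} are the right points to check and are handled correctly.
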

\begin{theorem}
Let $(p_n)\in SV\!A_+$. If sequence $(\alpha_n)$ of IFNs is $(\bar{N},p)-^{\oplus}$convergent to an IFN $\xi$ and $\left(\Delta^*(1-\mu_n)\right)^n\stackrel{*}{\to}1$, $\left(\Delta^*\nu_n\right)^n\stackrel{*}{\to}1$, then $(\alpha_n)$ is $^{\oplus}$convergent to $\xi$.
\end{theorem}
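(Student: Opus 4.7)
The plan is to reduce the statement to two parallel applications of Corollary \ref{cor2} on the coordinate sequences $(1-\mu_n)$ and $(\nu_n)$ in $\mathbb{R}^+$, and then recombine via the coordinate characterization of $^{\oplus}$convergence.

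First, I would invoke Theorem \ref{w-levels}: the hypothesis that $(\alpha_n)$ is $(\bar{N},p)-^{\oplus}$convergent to $\xi$ is exactly equivalent to saying that the weighted geometric means satisfy
\begin{eqnarray*}
w(1-\mu_n)\to 1-\mu_{\xi}\qquad\text{and}\qquad w(\nu_n)\to \nu_{\xi}.
\end{eqnarray*}
Note that since $\alpha_n$ and $\xi$ are IFNs with $\xi<_{\mathsmaller{L}}(1,0)$ (per the standing assumption of the subsection), the numbers $1-\mu_n$, $\nu_n$, $1-\mu_\xi$, $\nu_\xi$ all lie in $\mathbb{R}^+$, so the machinery of Section \ref{section3} applies to each coordinate separately. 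By Remark \ref{remark1}, ordinary convergence in $\mathbb{R}^+$ coincides with $*$convergence, so these two statements say precisely that the sequences $(1-\mu_n)$ and $(\nu_n)$ are $(\bar{G},p)-$*convergent to $1-\mu_{\xi}$ and $\nu_{\xi}$, respectively.

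Next, the hypotheses $\left(\Delta^*(1-\mu_n)\right)^n\stackrel{*}{\to}1$ and $\left(\Delta^*\nu_n\right)^n\stackrel{*}{\to}1$ are exactly the one-sided Landau-type conditions required by Corollary \ref{cor2}, applied to the two scalar sequences $(1-\mu_n)$ and $(\nu_n)$ respectively. Since $(p_n)\in SV\!A_+$, Corollary \ref{cor2} applies to each and yields $*$convergence, hence ordinary convergence, of the coordinate sequences:
\begin{eqnarray*}
1-\mu_n\to 1-\mu_{\xi}\qquad\text{and}\qquad \nu_n\to \nu_{\xi},
\end{eqnarray*}
which is the same as $\mu_n\to\mu_\xi$ and $\nu_n\to\nu_\xi$.

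Finally, I would close the argument by invoking Theorem \ref{levels+}, whose ``sufficiency'' direction converts the coordinatewise convergence $\mu_n\to\mu_\xi$, $\nu_n\to\nu_\xi$ (with $\xi<_{\mathsmaller{L}}(1,0)$) into $^{\oplus}$convergence of $(\alpha_n)$ to $\xi$. There is no real obstacle in this proof: the only thing to be careful about is ensuring that the ambient positivity hypotheses needed to invoke Section \ref{section3} and Theorem \ref{levels+} are in force; both are automatic under the subsection's standing assumption $\xi<_{\mathsmaller{L}}(1,0)$ together with the IFN constraints on $(\alpha_n)$. In essence, the theorem is a componentwise lift of Corollary \ref{cor2} to the intuitionistic fuzzy setting via Theorems \ref{levels+} and \ref{w-levels}.
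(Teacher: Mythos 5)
Your proposal is correct and follows exactly the route the paper intends: the paper states this theorem without a separate proof, deriving it "in view of" Theorem \ref{w-levels}, Corollary \ref{cor2} and Theorem \ref{levels+}, which is precisely your componentwise reduction to the scalar sequences $(1-\mu_n)$ and $(\nu_n)$. Your attention to the standing assumption $\xi<_{\mathsmaller{L}}(1,0)$ guaranteeing positivity of the coordinates matches the paper's setup, so there is nothing to add.
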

\subsection{Convergence via weighted geometric means}
In some cases of sequences of IFNs as in Example \ref{ex4}, $^{\otimes}$convergence may fail in intuitionistic fuzzy number space. In such cases we may use weighted geometric means to grasp a limit. In this subsection we assume $\beta>_{\mathsmaller{L}}(0,1)$ for any IFN $\beta$.
\begin{definition}
Let $(\alpha_n)$ be a sequence of IFNs and sequence $(p_n)$ of nonnegative real numbers satisfying \eqref{p}. Then, sequence of weighted geometric means of $(\alpha_n)$ is defined by
\begin{eqnarray*}
h_n=\left\{\bigotimes_{k=0}^n\alpha_k^{p_k}\right\}^{1/P_n}\qquad\qquad \qquad\qquad(n=0,1,2,....).
\end{eqnarray*}
Sequence $(\alpha_n)$ is said to be $^{\otimes}$convergent by weighted geometric mean method, shortly $(\bar{G},p)-^{\otimes}$convergent, to IFN $\xi$ if $(h_n)$ $^{\otimes}$converges to $\xi$.
\end{definition}
We note that $(h_n)$ is, in fact, an infinite sequence of intuitionistic fuzzy weighted geometric operators (IFWG) defined by  Xu and Yager\cite{xu-yager}.
\begin{theorem}\label{w-levels2}
A sequence $(\alpha_n)$ of IFNs is $(\bar{G},p)-^{\otimes}$convergent to an IFN $\xi$ if and only if $\lim_{n\to\infty}w(\mu_n)=\mu_{\xi}$ and $\lim_{n\to\infty}w(1-\nu_n)=1-\nu_{\xi}$, where $w_n$ is weighted geometric mean operator in \eqref{w}.
\end{theorem}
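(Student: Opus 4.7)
The plan is to mirror the proof of Theorem \ref{w-levels}: compute the closed form of $h_n$ using the intuitionistic fuzzy operations in Definition \ref{substraction}, identify its membership and non-membership components in terms of the scalar weighted geometric mean $w$, and then invoke Theorem \ref{levelsx} as the bridge between $^{\otimes}$convergence of IFNs and ordinary convergence of the component sequences.

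First, I would apply part (5) of Definition \ref{substraction} to each factor to get $\alpha_k^{p_k}=(\mu_k^{p_k},1-(1-\nu_k)^{p_k})$. Then, iterating part (3), the tensor product telescopes into
\begin{eqnarray*}
\bigotimes_{k=0}^n\alpha_k^{p_k}=\left(\prod_{k=0}^n\mu_k^{p_k},\ 1-\prod_{k=0}^n(1-\nu_k)^{p_k}\right).
\end{eqnarray*}
Taking the $1/P_n$ power via part (5) again and simplifying $(1-\nu)^{1/P_n}$ for $\nu=1-\prod_k(1-\nu_k)^{p_k}$, the inner layers collapse and I obtain
\begin{eqnarray*}
h_n=\bigl(w(\mu_n),\,1-w(1-\nu_n)\bigr),
\end{eqnarray*}
where $w(\cdot)$ is the weighted geometric mean operator defined in \eqref{w}. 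The assumption $\beta>_{\mathsmaller{L}}(0,1)$ in this subsection guarantees $\mu_k>0$ and $\nu_k<1$, so each power operation is legitimate and $h_n$ is a genuine IFN.

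At this point Theorem \ref{levelsx} finishes the argument: $(h_n)$ $^{\otimes}$converges to $\xi$ if and only if its membership component $w(\mu_n)$ converges to $\mu_\xi$ and its non-membership component $1-w(1-\nu_n)$ converges to $\nu_\xi$, the latter being equivalent to $w(1-\nu_n)\to 1-\nu_\xi$. Since $(\bar{G},p)-^{\otimes}$convergence of $(\alpha_n)$ to $\xi$ is by definition $^{\otimes}$convergence of $(h_n)$ to $\xi$, this yields both directions simultaneously. I do not anticipate a genuine obstacle: the only place requiring a bit of care is the correct unfolding of the nested $1-(1-\cdot)$ expressions in the non-membership slot, so I would verify that step cleanly before appealing to Theorem \ref{levelsx}.
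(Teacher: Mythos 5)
Your proposal is correct and follows essentially the same route as the paper: compute the closed form $h_n=\bigl(w(\mu_n),\,1-w(1-\nu_n)\bigr)$ from the $\otimes$ and power operations, then apply Theorem \ref{levelsx} to $(h_n)$. The paper's proof is just a more compressed version of this same computation and appeal.
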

\begin{proof}
Let $(\alpha_n)$ be a sequence of IFNs. We have
\begin{eqnarray*}
h_n=\left\{\bigotimes_{k=0}^n\alpha_k^{p_k}\right\}^{1/P_n}=\left(\left\{\prod_{k=0}^n\mu_k^{p_k}\right\}^{1/P_n},1-\left\{\prod_{k=0}^n(1-\nu_k)^{p_k}\right\}^{1/P_n}\right)=\left(w(\mu_n),1-w(1-\nu_n)\right).
\end{eqnarray*}
By Theorem \ref{levelsx}, sequence $(h_n)$ $^{\otimes}$converges to $\xi$ if and only if $\lim_{n\to\infty}w(\mu_n)=\mu_{\xi}$ and $\lim_{n\to\infty}w(1-\nu_n)=1-\nu_{\xi}$. Hence, the proof is completed.
\end{proof}
\begin{theorem}\label{ifn-regular2}
If sequence $(\alpha_n)$ of IFNs is $^{\otimes}$convergent to an IFN $\xi$, then it is $(\bar{G},p)-^{\otimes}$convergent to $\xi$.
\end{theorem}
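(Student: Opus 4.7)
The plan is to mirror the proof of Theorem \ref{ifn-regular}, with the roles of the membership coordinate $\mu$ and the ``$1-\nu$'' coordinate swapped and with $\oplus$ replaced by $\otimes$. The building blocks are exactly the companion theorems already proved in the paper: Theorem \ref{levelsx} translates $^{\otimes}$convergence into componentwise convergence of $(\mu_n)$ and $(\nu_n)$, Theorem \ref{regular} promotes ordinary (equivalently, $*$-) convergence in $\mathbb{R}^+$ to convergence of the weighted geometric means $w_n$, and Theorem \ref{w-levels2} converts the two resulting scalar limits back into $(\bar{G},p)-^{\otimes}$convergence of $(\alpha_n)$.

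Concretely, I would proceed as follows. First, from the hypothesis that $(\alpha_n)$ $^{\otimes}$converges to $\xi$, together with the standing assumption of the subsection that $\xi>_{\mathsmaller{L}}(0,1)$, Theorem \ref{levelsx} yields $\lim_{n\to\infty}\mu_n=\mu_\xi$ and $\lim_{n\to\infty}\nu_n=\nu_\xi$, and therefore also $\lim_{n\to\infty}(1-\nu_n)=1-\nu_\xi$. Second, I note that the subsection's standing assumption $\beta>_{\mathsmaller{L}}(0,1)$ forces $\mu_n>0$, $\mu_\xi>0$, $1-\nu_n>0$ and $1-\nu_\xi>0$, so that $(\mu_n)$ and $(1-\nu_n)$ are genuine sequences in $\mathbb{R}^+$ with $\mathbb{R}^+$ limits. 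By Remark \ref{remark1}, ordinary convergence and $*$-convergence agree on $\mathbb{R}^+$, so Theorem \ref{regular} applies to each of these two sequences and gives $\lim_{n\to\infty}w(\mu_n)=\mu_\xi$ and $\lim_{n\to\infty}w(1-\nu_n)=1-\nu_\xi$. Third, these are exactly the two conditions in the biconditional of Theorem \ref{w-levels2}, so $(\alpha_n)$ is $(\bar{G},p)-^{\otimes}$convergent to $\xi$, completing the proof.

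No real obstacle is expected: the argument is a structural copy of Theorem \ref{ifn-regular}. The only point that merits explicit verification, and which I would state up front, is that the standing assumption $\beta>_{\mathsmaller{L}}(0,1)$ is what licenses the use of Theorem \ref{regular} on both coordinates, since that theorem is stated only for sequences with values in $\mathbb{R}^+$ and limits in $\mathbb{R}^+$; if $\mu_\xi$ or $1-\nu_\xi$ were allowed to vanish, the weighted geometric means on those coordinates could collapse and the invocation of Theorem \ref{regular} would be illegitimate.
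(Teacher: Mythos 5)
Your proposal is correct and follows exactly the paper's own proof: apply Theorem \ref{levelsx} to get componentwise convergence, Theorem \ref{regular} to pass to the weighted geometric means of $(\mu_n)$ and $(1-\nu_n)$, and Theorem \ref{w-levels2} to conclude. Your added remarks on why the standing assumption $\beta>_{\mathsmaller{L}}(0,1)$ legitimizes the use of Theorem \ref{regular} are a sensible clarification but do not change the argument.
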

\begin{proof}
Let $(\alpha_n)$ $^{\otimes}$converge to $\xi$. From Theorem \ref{levelsx} we have $\lim_{n\to\infty}\mu_n=\mu_{\xi}$ and $\lim_{n\to\infty}1-\nu_n=1-\nu_{\xi}$. Then, from Theorem \ref{regular} we have $\lim_{n\to\infty}w(\mu_n)=\mu_{\xi}$ and $\lim_{n\to\infty}w(1-\nu_n)=1-\nu_{\xi}$ which implies $(\alpha_n)$ is $(\bar{G},p)-^{\otimes}$convergent to $\xi$ in view of Theorem \ref{w-levels2}.
\end{proof}
$(\bar{G},p)-^{\otimes}$convergence of a sequence of IFNs does not imply $^{\otimes}$convergence by the following example.
\begin{example}\label{ex4}
Consider sequence $(\alpha_n)$ of IFNs defined by
\begin{eqnarray*}
\alpha_n=(\mu_n,\nu_n)=\left(\left(\frac{1}{9}\right)^{(-1)^n+2},1-\left(\frac{1}{4}\right)^{(-1)^n+2}\right).
\end{eqnarray*}
$(\alpha_n)$ is not $^{\otimes}$convergent by Theorem \ref{levelsx}. But, it is $(\bar{G},p)-^{\otimes}$convergent to IFN $\xi=(1/27,7/8)$ for
\begin{eqnarray*}
p_n=\begin{cases}
3, \quad &n\ is\ odd\\
1, &n\ is\ even
\end{cases}
\end{eqnarray*}
in view of the facts that
\begin{eqnarray*}
\lim_{n\to\infty}w(\mu_n)=\lim_{n\to\infty}\left\{\prod_{k=0}^n\left(\frac{1}{9}\right)^{p_k((-1)^k+2)}\right\}^{1/P_n}=\left(\frac{1}{9}\right)^{3/2}=\frac{1}{27}=\mu_{\xi}
\end{eqnarray*}
and
\begin{eqnarray*}
\lim_{n\to\infty}w(1-\nu_n)=\lim_{n\to\infty}\left\{\prod_{k=0}^n\left(\frac{1}{4}\right)^{p_k((-1)^k+2)}\right\}^{1/P_n}=\left(\frac{1}{4}\right)^{3/2}=\frac{1}{8}=1-\nu_{\xi}.
\end{eqnarray*}
by virtue of Theorem \ref{w-levels2}.
\end{example}
In view of Theorems \ref{main},\ref{levelsx},\ref{w-levels2} and Corollaries \ref{slowly},\ref{cor1},\ref{cor2}; we give the conditions under which $(\bar{G},p)-^{\otimes}$convergence implies $^{\otimes}$convergence in intuitionistic fuzzy number space.
\begin{theorem}\label{main2}
Let $(p_n)\in SV\!A_+$. If sequence $(\alpha_n)$ of IFNs is $(\bar{G},p)-^{\otimes}$convergent to an IFN $\xi$, then $(\alpha_n)$ is $^{\otimes}$convergent to $\xi$ if and only if one of the following two conditions hold:
\begin{eqnarray*}
\liminf_{\ \lambda\rightarrow 1^+}\limsup_{n\rightarrow \infty}\left\{\left|\prod_{k=n+1}^{\lambda_n}\left(\frac{\mu_k}{\mu_n}\right)^{p_k}\right|^*\right\}^{\frac{1}{P_{\lambda_n}-P_n}}=1,\ \ \liminf_{\ \lambda\rightarrow 1^+}\limsup_{n\rightarrow \infty}\left\{\left|\prod_{k=n+1}^{\lambda_n}\left(\frac{1-\nu_k}{1-\nu_n}\right)^{p_k}\right|^*\right\}^{\frac{1}{P_{\lambda_n}-P_n}}=1
\end{eqnarray*}
or
\begin{eqnarray*}
\liminf_{\ \lambda\rightarrow 1^-}\limsup_{n\rightarrow \infty}\left\{\left|\prod_{k=\lambda_n+1}^{n}\left(\frac{\mu_n}{\mu_k}\right)^{p_k}\right|^*\right\}^{\frac{1}{P_n-P_{\lambda_n}}}=1, \  \  \liminf_{\ \lambda\rightarrow 1^-}\limsup_{n\rightarrow \infty}\left\{\left|\prod_{k=\lambda_n+1}^{n}\left(\frac{1-\nu_n}{1-\nu_k}\right)^{p_k}\right|^*\right\}^{\frac{1}{P_n-P_{\lambda_n}}}=1.
\end{eqnarray*}
\end{theorem}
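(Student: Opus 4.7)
The plan is to reduce the theorem to a pair of applications of Theorem \ref{main} to the real sequences $(\mu_n)$ and $(1-\nu_n)$ in $\mathbb{R}^+$. By the subsection's standing assumption $\beta>_{\mathsmaller{L}}(0,1)$ and the hypothesis $\xi>_{\mathsmaller{L}}(0,1)$ (needed in order to invoke Theorem \ref{levelsx}), all four quantities $\mu_n$, $1-\nu_n$, $\mu_\xi$, $1-\nu_\xi$ lie in $\mathbb{R}^+$, so Theorem \ref{main} applies directly to each of these real sequences.

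First I would invoke Theorem \ref{w-levels2} to rewrite the hypothesis: $(\alpha_n)$ is $(\bar{G},p)-^{\otimes}$convergent to $\xi$ if and only if $w(\mu_n)\to\mu_\xi$ and $w(1-\nu_n)\to 1-\nu_\xi$, which by Remark \ref{remark1} is equivalent to saying that the real sequences $(\mu_n)$ and $(1-\nu_n)$ are $(\bar{G},p)-$*convergent to $\mu_\xi$ and $1-\nu_\xi$ respectively. Similarly, Theorem \ref{levelsx} tells us that $^{\otimes}$convergence of $(\alpha_n)$ to $\xi$ is equivalent to $\mu_n\to\mu_\xi$ together with $\nu_n\to\nu_\xi$, i.e.\ to *convergence of both $(\mu_n)$ and $(1-\nu_n)$ to $\mu_\xi$ and $1-\nu_\xi$.

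With these translations in hand, the proof is a componentwise application of Theorem \ref{main}. For necessity, suppose $(\alpha_n)$ is $^{\otimes}$convergent to $\xi$; then both $(\mu_n)$ and $(1-\nu_n)$ are *convergent, and the necessity direction of Theorem \ref{main} forces each of \eqref{con1} and \eqref{con2} to hold for each of these two sequences. In particular both stated pairs of conditions in the present theorem are satisfied. For sufficiency, if the first pair (the two $\lambda\to 1^+$ conditions) holds, I would apply the sufficiency part of Theorem \ref{main} once to $(\mu_n)$ and once to $(1-\nu_n)$ to conclude *convergence of both; the same argument works if the second pair (the $\lambda\to 1^-$ conditions) holds. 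In either case, Theorem \ref{levelsx} then delivers $^{\otimes}$convergence of $(\alpha_n)$ to $\xi$.

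The one point requiring care---and the place I would verify explicitly rather than gloss---is the pairing of the conditions. Theorem \ref{main} offers a disjunction between \eqref{con1} and \eqref{con2}, whereas here we need the \emph{same} clause of that disjunction to work simultaneously for the two component sequences $(\mu_n)$ and $(1-\nu_n)$. In the sufficiency direction this pairing is baked into the statement, since both members of each pair share the direction $\lambda\to 1^+$ or $\lambda\to 1^-$; in the necessity direction it is automatic, because *convergence of each component forces \emph{both} conditions of Theorem \ref{main}, not just one. Hence no real obstruction arises, and the argument collapses to a disciplined componentwise bookkeeping of Theorems \ref{main}, \ref{levelsx}, and \ref{w-levels2}.
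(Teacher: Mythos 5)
Your proposal is correct and is exactly the argument the paper intends: the paper states Theorem \ref{main2} without a written proof, deriving it ``in view of'' Theorems \ref{main}, \ref{levelsx} and \ref{w-levels2}, which is precisely your componentwise reduction to the real sequences $(\mu_n)$ and $(1-\nu_n)$. Your explicit check of the pairing of conditions (automatic in the necessity direction because Theorem \ref{main}'s necessity part yields both \eqref{con1} and \eqref{con2}) is a worthwhile clarification that the paper leaves implicit.
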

\begin{theorem}
Let $(p_n)\in SV\!A_+$. If sequence $(\alpha_n)$ of IFNs is $(\bar{G},p)-^{\otimes}$convergent to an IFN $\xi$ and sequences $(\mu_n)$, $(1-\nu_n)$ of real numbers are $*-$slowly oscillating, then $(\alpha_n)$ is $^{\otimes}$convergent to $\xi$.
\end{theorem}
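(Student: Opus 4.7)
The plan is to reduce the intuitionistic fuzzy statement to two scalar statements about the real-valued sequences $(\mu_n)$ and $(1-\nu_n)$ in $\mathbb{R}^+$, and then invoke the scalar Tauberian result (Corollary \ref{slowly}) componentwise.

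First I would unwrap the hypothesis using Theorem \ref{w-levels2}: since $(\alpha_n)$ is $(\bar{G},p)-^{\otimes}$convergent to $\xi$, the scalar weighted geometric means satisfy $w(\mu_n) \to \mu_{\xi}$ and $w(1-\nu_n) \to 1-\nu_{\xi}$. In the language of Section \ref{section3}, this means that the positive real sequences $(\mu_n)$ and $(1-\nu_n)$ are $(\bar{G},p)-{*}$convergent to $\mu_\xi$ and $1-\nu_\xi$ respectively (recalling that $*-$convergence coincides with ordinary convergence by Remark \ref{remark1}).

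Next I would apply Corollary \ref{slowly} separately to each of these scalar sequences. The weight hypothesis $(p_n)\in SV\!A_+$ is exactly what that corollary requires, and the assumed $*-$slow oscillation of $(\mu_n)$ and $(1-\nu_n)$ supplies the Tauberian condition. Corollary \ref{slowly} then yields $\mu_n \stackrel{*}{\to} \mu_\xi$ and $(1-\nu_n) \stackrel{*}{\to} 1-\nu_\xi$, i.e.\ $\lim_{n\to\infty}\mu_n = \mu_\xi$ and $\lim_{n\to\infty}\nu_n = \nu_\xi$ in the usual sense.

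Finally, with both coordinate limits in hand and $\xi >_{\mathsmaller{L}} (0,1)$ (implicit from the standing assumption of this subsection), Theorem \ref{levelsx} converts the coordinatewise convergence back into $^{\otimes}$convergence of $(\alpha_n)$ to $\xi$, completing the argument. There is no real obstacle: the proof is essentially a wrapper that threads Theorem \ref{w-levels2} $\Rightarrow$ Corollary \ref{slowly} (twice) $\Rightarrow$ Theorem \ref{levelsx}. The only minor point to verify is that the $(1-\mu_k)$ vs.\ $\mu_k$ roles from the $\oplus$ case are correctly swapped to $\mu_k$ and $(1-\nu_k)$ as dictated by the shape of $h_n$ in Theorem \ref{w-levels2}.
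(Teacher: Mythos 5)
Your proposal is correct and follows exactly the route the paper intends: the paper states this theorem without a written proof, citing precisely Theorem \ref{w-levels2}, Corollary \ref{slowly} (applied componentwise to $(\mu_n)$ and $(1-\nu_n)$, which lie in $\mathbb{R}^+$ by the standing assumption $\beta>_{\mathsmaller{L}}(0,1)$), and Theorem \ref{levelsx}. Nothing is missing.
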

\begin{theorem}
Let $(p_n)\in SV\!A_+$. If sequence $(\alpha_n)$ of IFNs is $(\bar{G},p)-^{\otimes}$convergent to an IFN $\xi$ and sequences $\left(\Delta^*\mu_n\right)^n$, $\left(\Delta^*(1-\nu_n)\right)^n$ of real numbers are *bounded, then $(\alpha_n)$ is $^{\otimes}$convergent to $\xi$.
\end{theorem}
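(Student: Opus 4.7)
The plan is to reduce the statement to its two scalar components and then apply Corollary \ref{cor1} separately to each. Writing $\alpha_n=(\mu_n,\nu_n)$ and $\xi=(\mu_\xi,\nu_\xi)$, the standing assumption of the subsection ($\beta>_{\mathsmaller{L}}(0,1)$ for every IFN $\beta$) guarantees $\mu_n>0$ and $1-\nu_n>0$ for all $n$, so both $(\mu_n)$ and $(1-\nu_n)$ are legitimate sequences in $\mathbb{R}^+$ and all the multiplicative machinery of Section \ref{section3} is available on them.

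First I would translate the hypothesis on $(\alpha_n)$ into hypotheses on the two component sequences. By Theorem \ref{w-levels2}, the assumption that $(\alpha_n)$ is $(\bar{G},p)-^{\otimes}$convergent to $\xi$ means exactly that $w(\mu_n)\to \mu_\xi$ and $w(1-\nu_n)\to 1-\nu_\xi$; in the terminology of Section \ref{section3}, $(\mu_n)$ is $(\bar{G},p)$-*convergent to $\mu_\xi$ and $(1-\nu_n)$ is $(\bar{G},p)$-*convergent to $1-\nu_\xi$ (using Remark \ref{remark1} to pass freely between *convergence and ordinary convergence of positive reals).

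Next, I would invoke Corollary \ref{cor1} twice. The hypothesis $(p_n)\in SV\!A_+$ is shared, and the two *boundedness hypotheses $\left((\Delta^*\mu_n)^n\right)$ is *bounded and $\left((\Delta^*(1-\nu_n))^n\right)$ is *bounded are precisely what Corollary \ref{cor1} requires of the underlying scalar sequence. Applying the corollary to $(\mu_n)$ yields $\mu_n\stackrel{*}{\to}\mu_\xi$, and applying it to $(1-\nu_n)$ yields $1-\nu_n\stackrel{*}{\to}1-\nu_\xi$. By Remark \ref{remark1} these become the classical limits $\lim_{n\to\infty}\mu_n=\mu_\xi$ and $\lim_{n\to\infty}\nu_n=\nu_\xi$.

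Finally, I would assemble the coordinatewise convergence back into $^{\otimes}$convergence by means of Theorem \ref{levelsx}; since $\xi>_{\mathsmaller{L}}(0,1)$ by the standing assumption, that theorem applies and delivers $^{\otimes}$convergence of $(\alpha_n)$ to $\xi$, completing the proof. No real obstacle arises: the whole argument is a two-coordinate bookkeeping exercise built on top of Corollary \ref{cor1}, and the only point that needs a momentary check is that the standing assumption $\beta>_{\mathsmaller{L}}(0,1)$ places $(\mu_n)$ and $(1-\nu_n)$ in $\mathbb{R}^+$ so that the scalar results are actually applicable.
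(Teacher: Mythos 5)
Your proposal is correct and follows exactly the route the paper intends: the paper states this theorem as an immediate consequence ``in view of Theorems \ref{main}, \ref{levelsx}, \ref{w-levels2} and Corollaries \ref{slowly}, \ref{cor1}, \ref{cor2}'', and your coordinatewise reduction to $(\mu_n)$ and $(1-\nu_n)$ via Theorem \ref{w-levels2}, followed by Corollary \ref{cor1} on each component and reassembly through Theorem \ref{levelsx}, is precisely that argument spelled out. The only detail worth noting is that you correctly flag the role of the standing assumption $\beta>_{\mathsmaller{L}}(0,1)$ in keeping both component sequences inside $\mathbb{R}^+$.
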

\begin{theorem}
Let $(p_n)\in SV\!A_+$. If sequence $(\alpha_n)$ of IFNs is $(\bar{G},p)-^{\otimes}$convergent to an IFN $\xi$ and $\left(\Delta^*\mu_n\right)^n\stackrel{*}{\to}1$, $\left(\Delta^*(1-\nu_n)\right)^n\stackrel{*}{\to}1$, then $(\alpha_n)$ is $^{\otimes}$convergent to $\xi$.
\begin{remark}
The results of Subsections 4.1--4.2 can be extended to other convergence types of sequences of IFNs, provided that chosen type of convergence satisfies Theorem \ref{levels+} or Theorem \ref{levelsx}.
\end{remark}
\end{theorem}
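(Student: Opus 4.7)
The plan is to reduce the statement to its two real-valued coordinate sequences $(\mu_n)$ and $(1-\nu_n)$ in $\mathbb{R}^+$, apply the scalar result Corollary \ref{cor2} to each of them, and then reassemble via Theorem \ref{levelsx}. Because the hypothesis $\beta>_{\mathsmaller{L}}(0,1)$ is in force in this subsection, both $(\mu_n)$ and $(1-\nu_n)$ are genuine sequences in $\mathbb{R}^+$, so the scalar machinery of Section \ref{section3} applies to them directly.

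First I would invoke Theorem \ref{w-levels2}: the assumption that $(\alpha_n)$ is $(\bar{G},p)-^{\otimes}$convergent to $\xi$ translates precisely into
\begin{eqnarray*}
\lim_{n\to\infty}w(\mu_n)=\mu_{\xi}\qquad\text{and}\qquad \lim_{n\to\infty}w(1-\nu_n)=1-\nu_{\xi},
\end{eqnarray*}
which says exactly that the real sequences $(\mu_n)$ and $(1-\nu_n)$ are $(\bar{G},p)-$*convergent to $\mu_\xi$ and $1-\nu_\xi$ respectively. Combined with $(p_n)\in SV\!A_+$ and the given Landau-type hypotheses $(\Delta^*\mu_n)^n\stackrel{*}{\to}1$ and $(\Delta^*(1-\nu_n))^n\stackrel{*}{\to}1$, this puts each sequence squarely in the setting of Corollary \ref{cor2}.

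Applying Corollary \ref{cor2} to $(\mu_n)$ yields $\mu_n\stackrel{*}{\to}\mu_\xi$, and applying it to $(1-\nu_n)$ yields $(1-\nu_n)\stackrel{*}{\to}1-\nu_\xi$. By Remark \ref{remark1}, *convergence in $\mathbb{R}^+$ coincides with ordinary convergence, so $\lim_{n\to\infty}\mu_n=\mu_\xi$ and $\lim_{n\to\infty}\nu_n=\nu_\xi$. Finally, Theorem \ref{levelsx} converts these two coordinatewise limits back into $^{\otimes}$convergence of $(\alpha_n)$ to $\xi$, which is the desired conclusion.

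No step is genuinely hard: the proof is a three-move translation (coordinatize via Theorem \ref{w-levels2}, apply Corollary \ref{cor2} twice, recombine via Theorem \ref{levelsx}). The only point deserving care is making sure the scalar sequences fed into Corollary \ref{cor2} actually lie in $\mathbb{R}^+$, which is guaranteed by the standing assumption $\beta>_{\mathsmaller{L}}(0,1)$ of Subsection 4.2 (in particular $\mu_n>0$ and $\nu_n<1$).
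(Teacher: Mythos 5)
Your proposal is correct and matches the paper's intended argument exactly: the paper states this result without a separate proof, deriving it "in view of" Theorem \ref{w-levels2}, Corollary \ref{cor2} and Theorem \ref{levelsx}, which is precisely your three-move reduction (coordinatize, apply the scalar Landau-type corollary to $(\mu_n)$ and $(1-\nu_n)$, recombine). Your added remark that the standing assumption $\beta>_{\mathsmaller{L}}(0,1)$ keeps both coordinate sequences in $\mathbb{R}^+$ is the right point of care and is consistent with the paper.
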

\section{Conclusion}
In this paper, we have used multiplicative calculus and introduced weighted mean method of convergence in $\mathbb{R}^+$. We have obtained various conditions under which convergence of sequences of positive real numbers follows from convergence of their weighted geometric means. Besides, we have defined some new types of convergence for sequences of IFNs and applied the obtained conditions to occurring weighted geometric averages of membership and non-membership functions in order to grasp convergence in intuitionistic fuzzy number space. Examples of sequences such that our methods of convergence work but the methods in the literature do not work have been also given to illustrate the advantage of proposed methods. In particular,
\begin{itemize}
\item sequence of real numbers in Example \ref{ex1}: ordinary convergence and the geometric mean method\cite{canak} do not work but $(\bar{G},p)-$*convergence works,
\item sequence of real numbers in Example \ref{ex2}: $(\bar{G},p)-$*convergence is able to assign an intended limit value with appropriate choice of the weights $p=(p_n)$,
\item sequence of IFNs in Example \ref{nonunique}: Definition \ref{partialconvergence} does not work, Definition \ref{totalconvergence} works with infinite number of limits but $^{\oplus}$convergence and $^{\otimes}$convergence work with a unique limit,
\item sequences of IFNs in Example \ref{ex3} and Example \ref{ex4}: the methods in the literature do not work but $(\bar{N},p)-^{\oplus}$convergence and $(\bar{G},p)-^{\otimes}$convergence work.
\end{itemize}
The results of this paper may help researchers to handle sequences of positive real numbers and sequences of IFNs. The results may also help when dealing with sequences of weighted geometric averages occurring in many fields of science as in sequences of IFNs mentioned in Section \ref{section4}. For future work, the results may be extended to different types of fuzzy sets such as Linear Diophantine fuzzy sets\cite{dia}, Pythagorean fuzzy sets\cite{pyt1,pyt2,pyt3}, etc.
\section*{Acknowledgement}
I would like to thank Assoc. Prof. \"{O}zer Talo for his valuable comments and suggestions which helped me to improve the quality of the manuscript.
\section*{Conflict of Interest}
The authors declare that there is no conflict of interest.

\end{document}